\newtheorem{thm}{Theorem}[section]
\newtheorem{thm2}{Theorem}
\newtheorem{thm3}{Theorem}
\newtheorem{thm4}{Theorem}
\newtheorem{thm5}{Theorem}
\newtheorem{lem}[thm]{Lemma}
\newtheorem{prop}[thm]{Proposition}
\newtheorem{defn}[thm5]{Definition}
\newtheorem{example}[thm4]{Example}
\newtheorem{rem}[thm3]{Remark}
\newtheorem{algorithm}[thm2]{Algorithm}
\def \diag {\,{\rm diag}\,}
\def \argmin {\,{\rm argmin}\,}
\newcommand{\se}{\text{e}}
\newcommand{\bR}{\mathbb{R}}
\newcommand{\bN}{\mathbb{N}}
\newcommand{\bZ}{\mathbb{Z}}
\newcommand{\vc}{\boldsymbol{c}}
\newcommand{\vd}{\boldsymbol{d}}
\newcommand{\valpha}{\boldsymbol{\alpha}}
\newcommand{\vhf}{\boldsymbol{\hat{f}}}
\newcommand{\vhh}{\boldsymbol{\hat{h}}}
\newcommand{\cH}{\mathcal{H}}
\newcommand{\mB}{\mathsf{B}}
\newcommand{\mT}{\mathsf{T}}
\newcommand{\mLambda}{\mathsf{D}}
\newcommand{\md}{\mathsf{d}}
\newcommand{\mPsi}{\mathsf{\Psi}}
\newcommand{\mOmega}{\mathsf{\Omega}}
\newcommand{\tA}{\tilde{A}}
\newcommand{\hf}{\hat{f}}
\newcommand{\hvf}{\boldsymbol{\hat{f}}}
\title{A Frame Theoretic Approach to the Non-Uniform Fast Fourier Transform \thanks{This work is supported in part by grants NSF-DMS 1216559 and AFOSR 12004863.}}
\author{Anne Gelb \thanks{School of Mathematical and Statistical Sciences,
         Arizona State University,
         P.O. Box 871804,
         Tempe, AZ 85287-1804. (\email{anne.gelb@asu.edu})} 
\and Guohui Song \thanks{Department of Mathematics, Clarkson University, Potsdam, NY 13699. (\email{gsong@clarkson.edu})}
}
\begin{document}
\maketitle

\begin{abstract}
Nonuniform Fourier data are routinely collected in applications such as magnetic resonance imaging, synthetic aperture radar, and synthetic imaging in radio astronomy.  To acquire a fast reconstruction that does not require an online inverse process, the non-uniform fast Fourier transform (NFFT), also called convolutional gridding,  is frequently employed.  While various investigations have led to improvements in accuracy, efficiency, and robustness of the NFFT, not much attention has been paid to the fundamental analysis of the scheme, and in particular its convergence properties.  This paper analyzes the convergence of the NFFT by casting it as a Fourier frame approximation.  In so doing, we are able to design parameters for the method that satisfy conditions for numerical convergence.  Our so called frame theoretic convolutional gridding algorithm can also be applied to detect features (such as edges) from non-uniform Fourier samples of piecewise smooth functions.
\end{abstract}

\begin{keywords}
Fourier Frames; Numerical Frame Approximation; Non-Uniform Fast Fourier Transform; Convolutional Gridding
\end{keywords}

\begin{AMS}
65T50; 65T40; 42C15
\end{AMS}

\pagestyle{myheadings}
\thispagestyle{plain}
\markboth{A. Gelb AND G. Song}{A Frame Theoretic Approach to the NFFT} 

\section{Introduction}
\label{sec:intro}
In several imaging applications, such as magnetic resonance imaging (MRI), \cite{BW00, pipe, sedarat}, synthetic aperture radar (SAR), or synthetic imaging in radio astronomy, \cite{briggs}, data are finitely sampled in the Fourier domain.  Although in many cases the data are presumed to be sampled uniformly, there are some situations for which the data are collected non-uniformly, either by instrument design or by the circumstances surrounding the collection.  Regardless of the particular collection protocol, reconstructing the underlying image can be viewed as an approximation to the inverse Fourier transform.   As the data often must be accurately analyzed in real time, numerical efficiency and robustness of the numerical algorithm is essential.

One common technique used to reconstruct images from non-uniform finitely sampled Fourier data involves ``regridding'' the data uniformly, via a convolution, so that the inverse fast Fourier transform (FFT) algorithm can be applied.  This technique goes by various names, depending on the particular application.  We will use the terms {\em non-uniform fast Fourier transform} (NFFT), \cite{nufft, fourmont}, and {\em convolutional gridding}, \cite{jackson, o'sullivan, pipe, sedarat}, interchangeably.   Briefly, the method uses practical and heuristic arguments to construct an ``essentially'' compactly supported window function, which is also ``essentially'' compactly supported in the Fourier domain.  Uniform Fourier coefficients are then computed by numerically approximating the convolution of the Fourier transform of the underlying function with the window function from the given non-uniformly sampled Fourier data.   Numerical weights, often called {\em density compensation factors}, are introduced to evaluate the convolution integral via summation.  The FFT is then used directly to reconstruct the underlying image.  

Although the convolutional gridding algorithm typically achieves satisfactory results, it is not difficult to generate examples where the method fails, \cite{Aditya09}.  Specifically, if the density compensation factors are viewed as quadrature weights for the inverse Fourier transform (or convolution integral), then the rate of convergence is tied to the accuracy of the numerical quadrature scheme.  Moreover, for sampling patterns that are increasingly sparse in the high frequency domain, such as those found in MRI, increasing the number of data points does nothing to improve the resolution, and hence standard quadrature rules may fail to converge.
Thus developing a new rigorous mathematical framework to analyze the convergence properties of the convolutional gridding technique is highly desirable. 

There are, of course, other algorithms that recover $f$ from its non-uniform Fourier data.  For example, a well studied problem in sampling theory is to recover a band-limited $\hat{f}$ from non-uniform samples  (see e.g. ~\cite{aldroubi2001nonuniform, benedetto1990irregular, feichtinger1995efficient,  feichtinger1992irregular} for history and references).  Techniques using compressed sensing are also becoming more prevalent, e.g. ~\cite{LDP}.  While these approaches each have their advantages, they are primarily solved using iterative algorithms, thus making them somewhat difficult to analyze.  The purpose of this paper is to establish a theoretical framework for the forward convolutional gridding method.
Specifically, instead of viewing convolutional gridding essentially as an interpolation-numerical quadrature technique, i.e., as a means to approximate uniform Fourier coefficients and then perform the (inverse) FFT, we will consider the convolutional gridding algorithm to be a {\em Fourier frame approximation} of the underlying function.\footnote{We note that in \cite{BW00} it was suggested that certain MRI sampling patterns may indeed be viewed as Fourier frame sequences.} To do this, we incorporate the convolutional gridding window function into the representation of $f$.  That is, we express $f$ in terms of the Fourier basis divided by the window function (rather than the standard Fourier basis).  There are several advantages to our approach.  First, it completely eliminates the intermediate interpolation step and the errors associated with it.  Second, it provides a framework for constructing both the density compensation factors and the window function using rigorous mathematical arguments while satisfying the need for efficient computation.  Finally, it establishes a forward algorithm for the finite (Fourier) frame approximation, which may be useful in other applications where data are sampled in a non-standard way. 


Our paper is organized as follows.  In Section \ref{sec:convgriddingandframe} we review the convolutional gridding algorithm and link it to the Fourier frame approximation.  We demonstrate how the density compensation factors for the convolutional gridding algorithm can be chosen to generate the best finite frame approximation from the given Fourier frame data.  In Section \ref{sec:edgedetection} we illustrate how our frame theoretic convolutional gridding algorithm can also be used to detect edges of a piecewise smooth function from non-uniform Fourier data.  Numerical examples are given in Section \ref{sec:numerics}, and in Section \ref{sec:conclusion} we provide some concluding remarks.

\section{Convolutional Gridding and the Finite Frame Approximation}
\label{sec:convgriddingandframe}
\subsection{The Convolutional Gridding Algorithm}
\label{sec:convgridding}

Let $f$ be an unknown piecewise analytic function supported in $[0, 1]$. Suppose we are given the finite samples 
\begin{eqnarray}
\label{eq:foursamples}
\hf_j=\langle f, \varphi_j \rangle = \int_0^1 f(x) \se^{-2\pi i \lambda_j x}dx, \quad -n\leq j\leq n,
\end{eqnarray}
where $\{\lambda_j: j\in \bZ\}$ is a (non-uniform) sequence in $\bR$. Our goal is to recover the underlying function $f$.

For illustrative purposes we  will consider the following sampling schemes for $\{\hat{f}(\lambda_j)\}_{j = -n}^n$, depicted in Figure \ref{fig:sampling}:
\begin{figure}[htbp]
        \subfloat[Jittered Sampling]{\includegraphics[scale = .40]{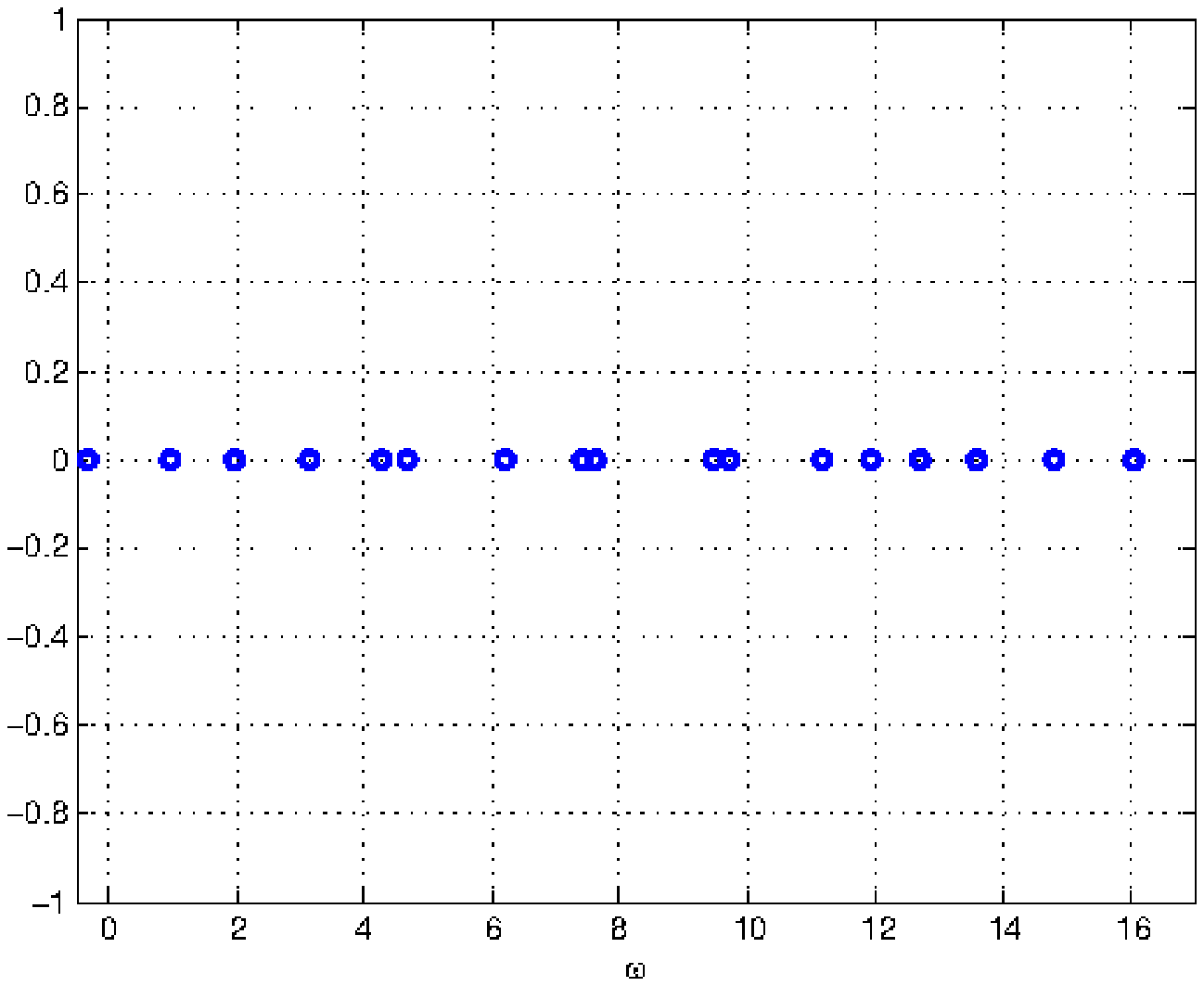}}
        \qquad
        \subfloat[Log Sampling]{ \includegraphics[scale = .40]{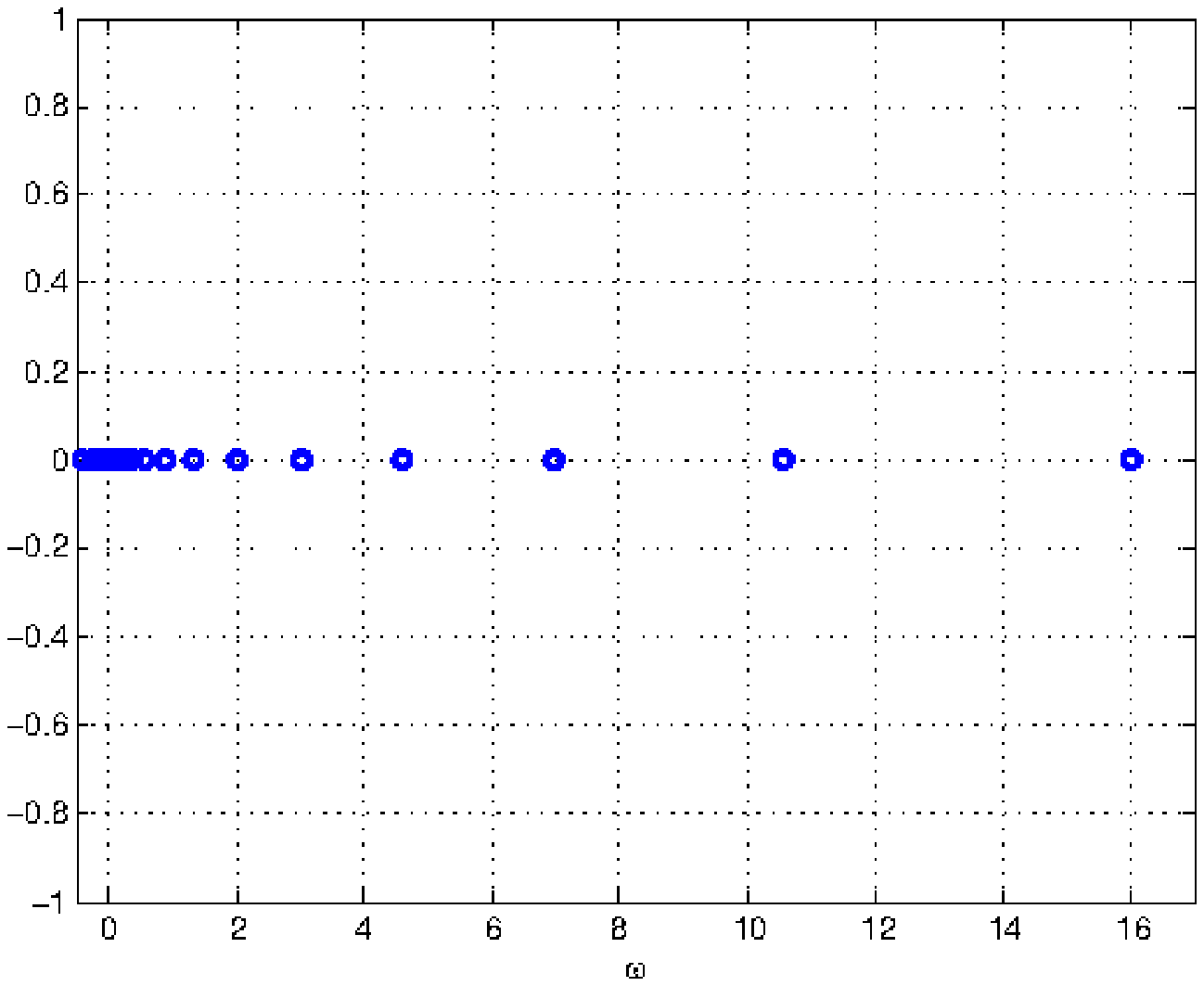}}
        \caption{Non-uniform Sampling Schemes (right half plane), $N=16$}
\label{fig:sampling}
\end{figure}
\begin{enumerate}
\item Jittered Sampling:
\begin{equation}
        \lambda_{k} = k \pm \tau_k, \quad \tau_k \sim U[0,\theta], \, k = -n, -(n-1), ... , n.
        \label{eq:jit_samp_exp}
\end{equation}
Here $U[0,\theta]$ denotes a uniform distribution on the interval $[0,\theta]$. The $\tau_k$'s are independent, identically distributed (iid) random variables, and represent a uniform jitter about the equispaced nodes with a maximal jitter of $\theta$. Further, both positive and negative jitters are equiprobable, with the sign of jitter at each node being independent of the sign of jitter at any other node.  We will always assume that $|\theta| < 1/2$.  One such example occurs is MRI, where Cartesian (equispaced) scans may have stochastic jitters in the scanning trajectory caused by the inaccuracies in generating the magnetic field gradients.

\item Log Sampling:  The samples are acquired at logarithmic intervals, with more samples acquired in lower frequencies. Specifically, $|\lambda_{k}|$ is logarithmically distributed between $10^{-v}$ and $n$, with $v>0$ and $2n+1$ being the total number of samples. This sampling pattern is inspired by non-Cartesian sampling patterns in MRI, where typical data acquisition schemes oversample the low frequencies while undersampling the high frequencies.

\end{enumerate}

To motivate the convolutional gridding method, we begin by writing $f$ as its inverse Fourier transform,
\begin{equation}
\label{eq:invFour}
f(x) = \int_{-\infty}^\infty\hat{f}(\lambda) e^{2\pi i \lambda x} d\lambda.
\end{equation}
Since $f$ is assumed to be piecewise analytic in $[0,1]$, $\hat{f}$ can not be  compactly supported.  However, our finite samples lie in the domain $[\lambda_{-n}, \lambda_{n}]$, so the best we can hope to approximate is
\begin{equation}
\label{eq:invFour2}
\tilde{f}(x) = \int_{\lambda_{-n}}^{\lambda_n}\hat{f}(\lambda) e^{2\pi i \lambda x} d\lambda,
\end{equation}
Unfortunately, finding a good quadrature rule for (\ref{eq:invFour2}) is difficult,  due to the oscillatory nature and slow decay of $\hat{f}(\lambda)$ as $\lambda \rightarrow \infty$.  Moreover, the set of samples $\{\hat{f}(\lambda_j)\}_{j = -n}^n$ are pre-determined, so we are not at liberty to choose them.

The approximation of (\ref{eq:invFour2}) may take the form
\begin{equation}
        S_n\tilde{f}(x):= \sum_{k=-n}^n\alpha_k \hat{f}(\lambda_k) e^{2\pi i \lambda_k x},
        \label{eq:dsum}
\end{equation}
where the weights, $\alpha_k$, are known as the {\em density compensation factors (DCFs)} in the MR imaging community.  For example, the DCFs may be generated using the well known trapezoidal rule:
\begin{equation}
        \alpha_k = \lambda_{k+1} - \lambda_k.
        \label{eq:simple_dcf}
\end{equation}
In two dimensions, we may compute a Voronoi tessellation of the non-Cartesian nodes and then assign individual cell areas as the density compensation weights.  For non-Cartesian sampling patterns which intentionally oversample the low frequencies while undersampling the high frequencies, the approximation (\ref{eq:dsum}), where the DCFs are chosen by {\em any} quadrature rule, e.g.~ (\ref{eq:simple_dcf}), will fail to converge.  That is because as the sampling is increased the space between the samples does not decrease, i.e., the sampling is not refined, and standard quadrature rules are therefore not applicable.  Moreover, in approximating (\ref{eq:invFour2}) instead of (\ref{eq:invFour}), we may incur significant error. As noted above, the underlying function is assumed to be piecewise smooth and supported in $[0,1]$, and hence it has associated slow decay rate in the Fourier domain.  If in addition the function exhibits highly oscillatory behavior, even more energy will be lost in the discarded high frequency modes.

This issue is partially addressed in \cite{GelbHines, Aditya09}, where spectral reprojection, \cite{GSSV92},  was applied to modified versions of (\ref{eq:dsum}).   In essence, it is possible to reproject (\ref{eq:dsum}) onto a polynomial basis for which the high frequency information beyond $|\lambda_n|$ contributes only exponentially small values onto the low modes in the reprojection basis.  It is important to note, however, that standard filtering algorithms are not  effective in reducing the error.  This is because classical filters are designed to mollify the high frequency coefficients in the harmonic series, that is, for $\lambda_j = j$, and indeed the numerical convergence estimates rely on the orthogonality of the harmonic basis, \cite{tadmor2007fma}.

Regardless of how the DCFs are chosen, the computational cost associated with evaluating (\ref{eq:dsum}) directly is significant, $\mathcal{O}(N^2)$, and even more so in multi-dimensions, since the fast Fourier transform (FFT) can not be directly applied.   Hence a speedup mechanism was devised to compute non-uniform sums such as (\ref{eq:dsum}) efficiently. The procedure goes by several names -- the non-uniform FFT, \cite{nufft}, the non-equispaced FFT, \cite{fourmont}, FFTs for non-uniform grids, \cite{steidl}, or convolutional gridding in the MR imaging community, \cite{jackson, o'sullivan}. We note also that iterative algorithms have become increasingly popular in computing the NFFT, \cite{beylkin1995fast, dutt1993fast, keiner2009using, potts2001fast, potts2008numerical}. However, the corresponding computational costs may still be too expensive for some applications, such as MRI.  Moreover, it is difficult to analyze the convergence properties of most iterative algorithms.  Thus we are motivated to both analyze and improve the direct methods of computing NFFT, which is the main purpose of this investigation.

The convolutional gridding procedure involves moving the non-uniform measurements, $\hat{f}(\lambda_j)$, to an equispaced grid (gridding) via a convolution operation. To accomplish this, we define $w$ to be a smooth window function and consider the function $g = fw$.  In addition, $w > 0$ should be essentially compactly supported so as to avoid significant aliasing error in the reconstruction of $f = g/w$. 
Some popular choices for window functions include Gaussian and Kaiser Bessel functions, \cite{jackson, o'sullivan, pipe}. 

The standard Fourier expansion is used to obtain $g$: 
\begin{equation}
g(x)=\sum_{l\in \bZ} \hat{g}(l) \se^{2\pi ilx},
\label{eq:Fouriersumg}
\end{equation}
where $\hat{g}(l)=\int_{0}^1 g(x)\se^{-2\pi i l x}dx$ and is determined by the convolution
\begin{equation}
\hat{g}(l) = (\hat{f}*\hat{w})(l) = \int_{-\infty}^{\infty} \hat{f}(\lambda) \hat{w}(l-\lambda) d \lambda.
\label{eq:gconvolve}
\end{equation}
Recall that one of the difficulties in approximating (\ref{eq:invFour}) was the slow decay rate of $\hat{f}$.  Convolutional gridding attempts to address this issue by requiring $\hat{w}$ to be essentially compactly supported.  True compact support is of course impossible if the window function $w$ is also strictly compactly supported.  One motivation for choosing $w$ as either the Kaiser Bessel or Gaussian function is that their Fourier coefficients are explicitly known. Any similar mollifier (bump function) should suffice, however, and some may have better convergence properties.

We can now approximate $\hat{g}(l)$ from the given samples $\{\hat{f}(\lambda_j)\}_{j = -n}^n$ using a quadrature rule of the form
\begin{equation}
\hat{g}(l) \approx \sum_{|j|\leq n}\alpha_j \hat{f}(\lambda_j) \hat{w}(l-\lambda_j).
\label{eq:ghatl}
\end{equation}
We note that generating the optimal DCFs,  $\{\alpha_j\}_{j = -n}^n$, is an ongoing research problem, since  not surprisingly, using quadrature weights such as (\ref{eq:simple_dcf}) to solve (\ref{eq:ghatl}) may not be satisfactory for many applications.  Iterative algorithms that take into consideration the full construction of (\ref{eq:g_approx}) seem to yield the best accuracy, see e.g. \cite{jacobi, pipe, sedarat}. As a consequence of our frame theoretic approach to the convolutional gridding algorithm, a  new technique to determine DCFs will be presented later in Subsection \ref{sec:linkingmethods}.

Substituting (\ref{eq:ghatl}) into (\ref{eq:Fouriersumg}) yields
\begin{equation}\label{eq:g_approx}
g(x) \approx \sum_{l\in \bZ}  \sum_{|j|\leq n}\alpha_j \hat{f}(\lambda_j) \hat{w}(l-\lambda_j) \se^{2\pi ilx},
\end{equation}
and division by the window function $w$ provides an approximation to $f$ as
\begin{equation}
f(x)\approx \sum_{l\in \bZ}  \sum_{|j|\leq n}\alpha_j \hat{f}(\lambda_j) \hat{w}(l-\lambda_j) \frac{\se^{2\pi ilx}}{w(x)}.
\label{eq:fapprox}
\end{equation}
Finally, since $\hat{w}$ is deliberately chosen to have fast decay, (\ref{eq:fapprox}) can be truncated for $l$, leading to the practical {\em convolutional gridding} computation for $f$,
\begin{equation}\label{f-cg}
A_{cg}(f)(x)=   \sum_{|j|\leq n}\,\, \sum_{|l-\lambda_j|\leq q}\alpha_j \hat{f}(\lambda_j) \hat{w}(l-\lambda_j) \frac{\se^{2\pi ilx}}{w(x)},
\end{equation}
where $q$ is a truncation threshold to be specified.  In practical applications, the reconstruction is often ``zero padded'' so that the computation of (\ref{f-cg}) near the boundary is not performed.  This prevents dividing by very small values of $w$.

The steps of the convolutional gridding algorithm are enumerated in Algorithm \ref{alg:grid} while a graphical illustration of the FFT reconstruction and window compensation is provided in Figure \ref{fig:cg}.

\begin{figure}[htbp]
        \subfloat[$\hat{f}$]{\includegraphics[scale = .30]{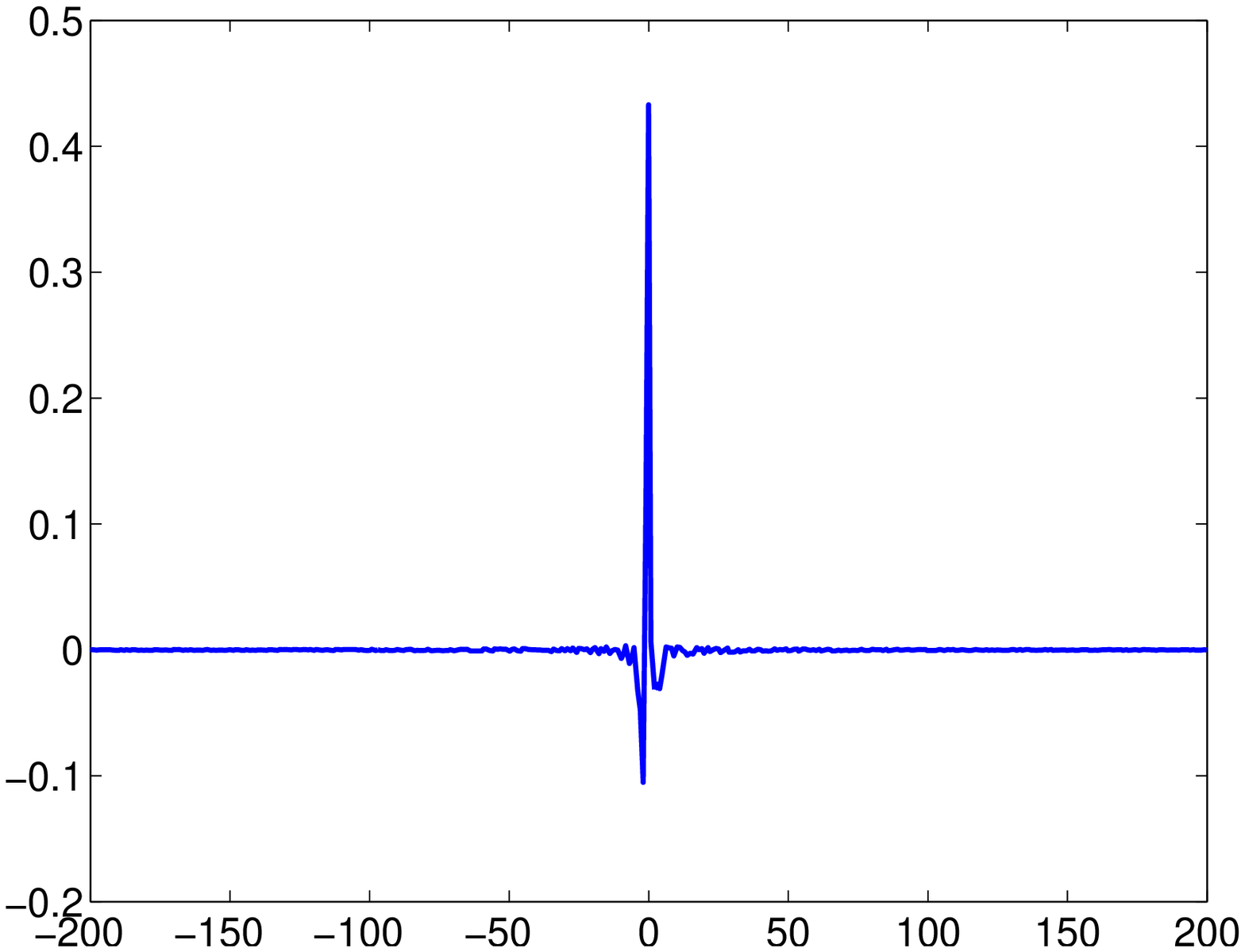}}
        \qquad
        \subfloat[$\hat{w}$]{\includegraphics[scale = .30]{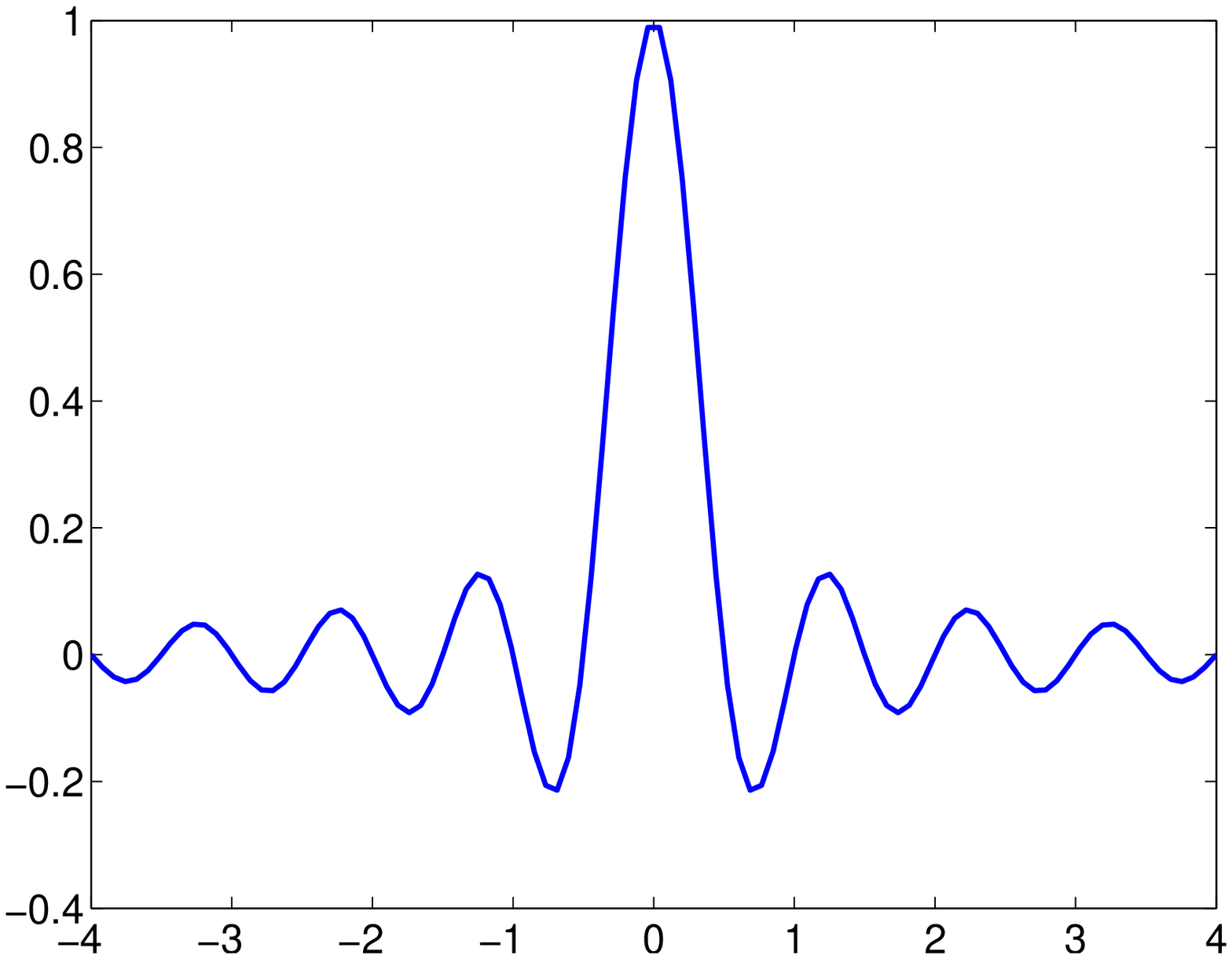}}\\
        \subfloat[$S_N g$]{\includegraphics[scale = .30]{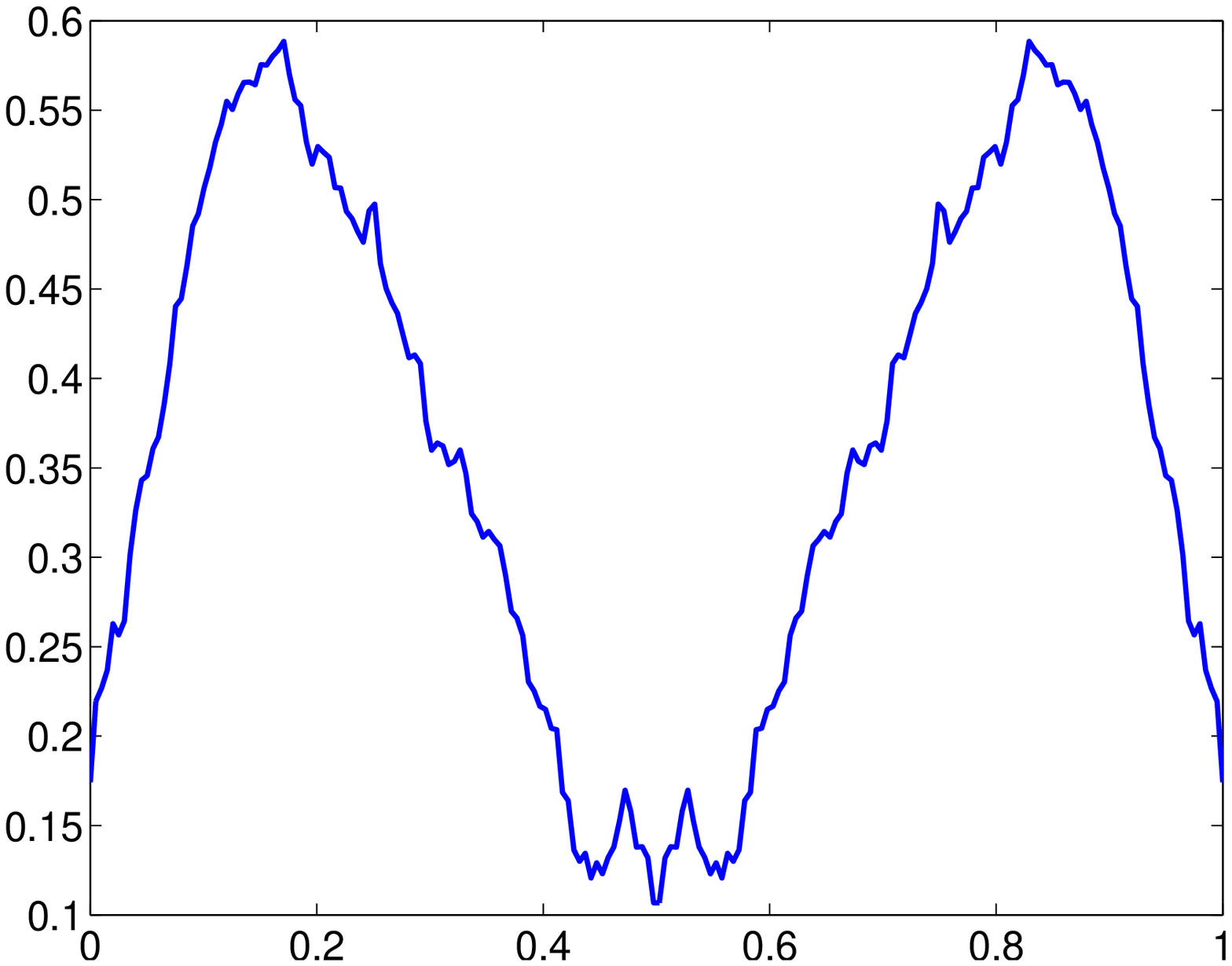}}
        \qquad
        \subfloat[$w$]{\includegraphics[scale = .30]{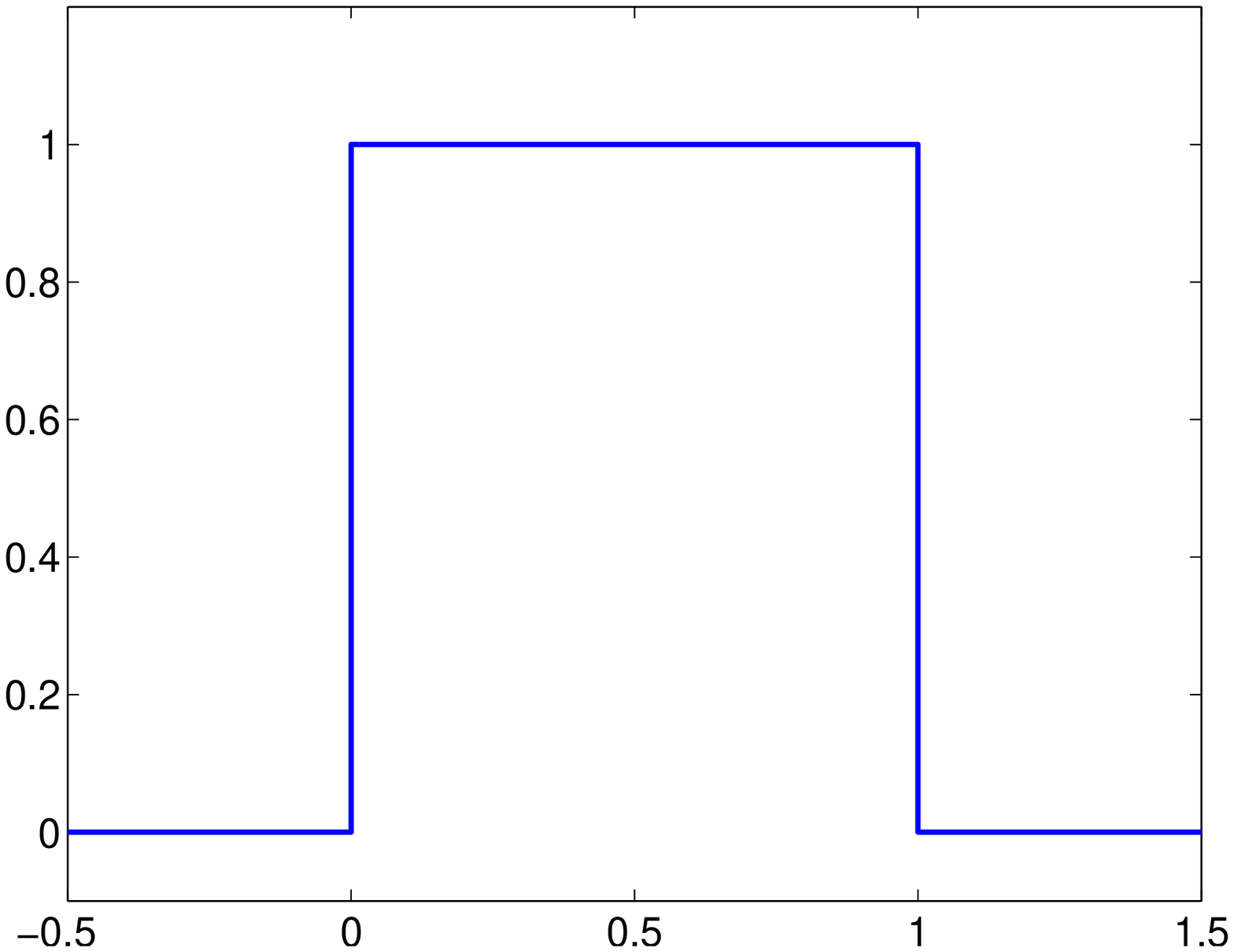}}\\
        \subfloat[$\frac{S_N g}{w}$]{\includegraphics[scale = .30]{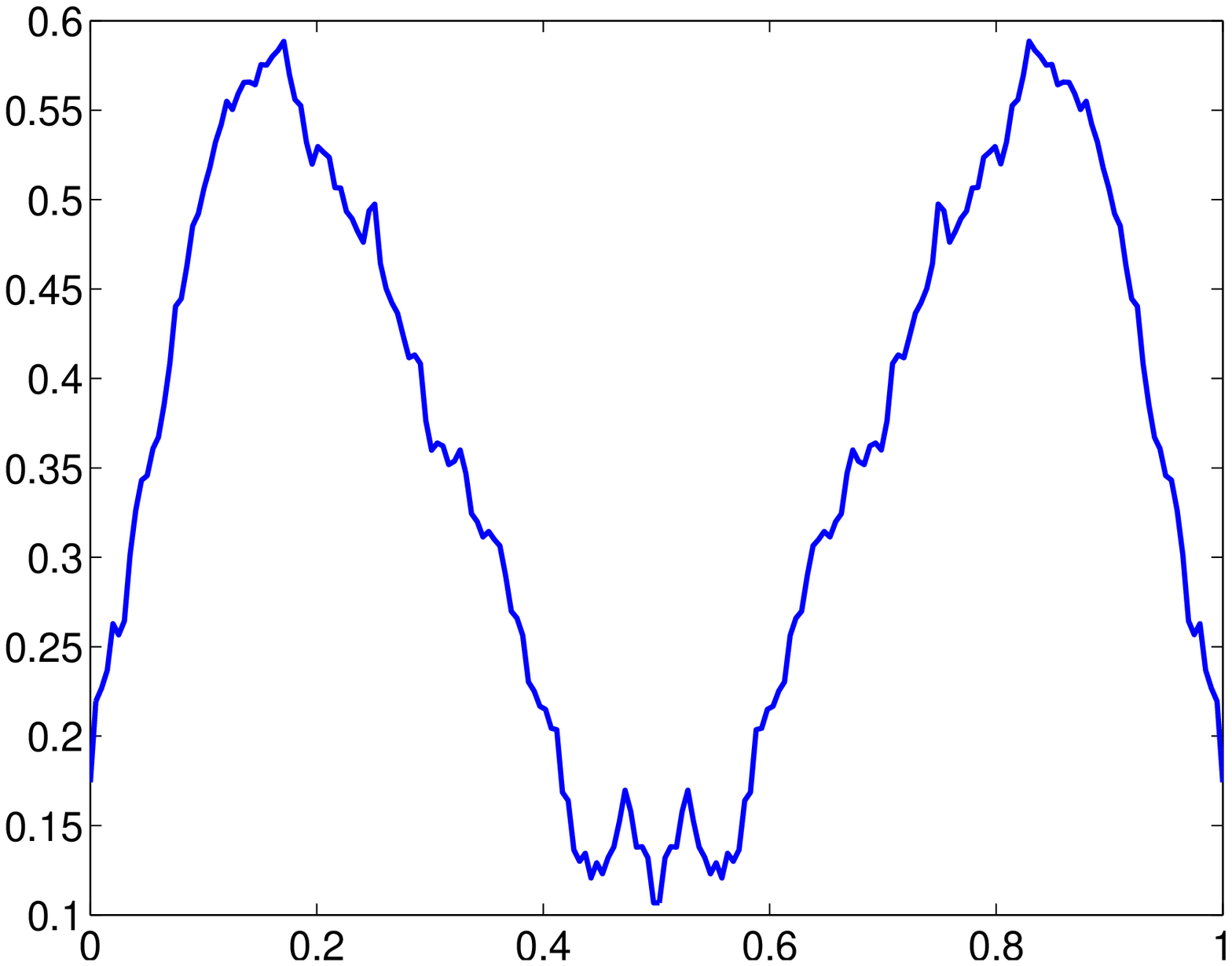}}
        \qquad
        \subfloat[$f$]{\includegraphics[scale = .30]{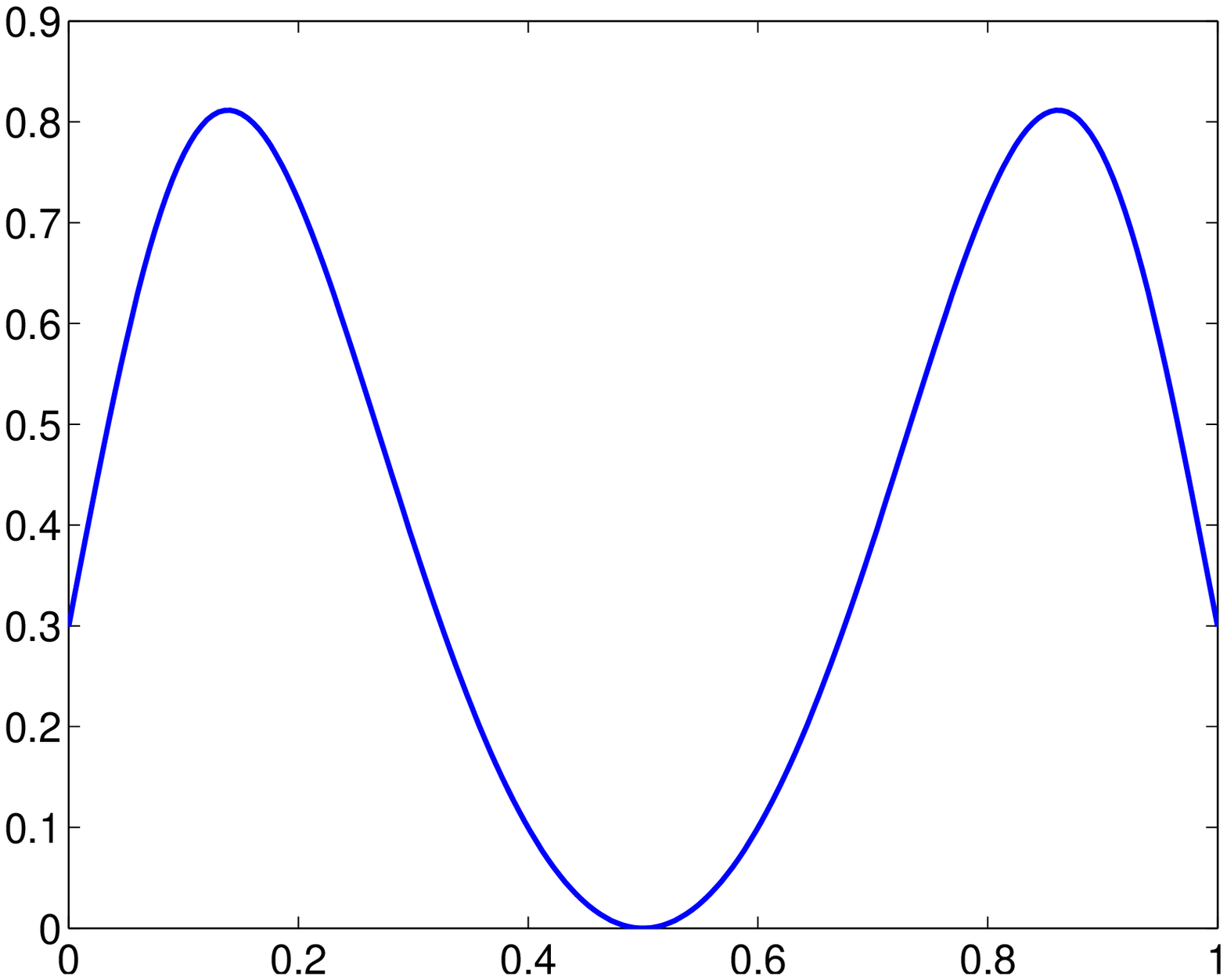}}\\
        \caption{A graphical illustration of the convolutional gridding method.  Note that $w$ is chosen for illustrative purposes only, as the corresponding decay rate of $\hat{w}$ is too slow for practical applications.}
\label{fig:cg}
\end{figure}

\begin{algorithm}
\label{alg:convgridding}
Given $\hat{f}$ at the non-equispaced measurement nodes $\lambda_k$, $k = -n,\cdots,n$. Choose truncation parameter $q$,
interpolating window function $w$, and DCFs $\alpha_k$ in (\ref{f-cg}).
\begin{enumerate}
\item Interpolate to equispaced nodes via convolution:
\begin{equation}
\label{eq:g_hat}
\hat{\tilde{g}}(l) = \sum_{k \mbox{ st. } |l-\lambda_{k}|\leq q} \alpha_k \hat{f}(\lambda_{k}) \hat{w}(l-w_{k}) \approx \hat{g}(l), \quad l = -n,\cdots,n.
\end{equation}
\item  Perform standard FFT computation:
\begin{equation}\label{eq:g}
S_n\tilde{g}(x_p) = \sum_{l=-n}^n \hat{\tilde{g}}(l) e^{2\pi ilx_p}, \quad x_p = \frac{p}{n}, \quad p = 0, ... , n-1.
\end{equation}
\item Compensate for using the interpolating window function: \\ $\displaystyle A_{cg}(f)(x_p) =  \frac{S_n\tilde{g}(x_p)}{w(x_p)}.$
\end{enumerate}
\label{alg:grid}
\end{algorithm}

Since $\tilde{g}$ is also a piecewise smooth function in $[0,1]$, its Fourier reconstruction will likewise yield the Gibbs phenomenon.  Hence the second step is typically filtered, i.e., the coefficients are multiplied by an admissible filter function $\sigma(\eta) = \sigma(\frac{|l|}{N})$.  An in depth discussion on Fourier filters can be found in \cite{tadmor2007fma}.

\subsection{The Fourier Frame Approximation}
\label{sec:fourframe}

Convolutional gridding, as it is described in Algorithm \ref{alg:grid}, is widely used in a variety of applications. A main advantage is that the process is a forward operation, that is, no inverse problem need be solved in the calculation.  Moreover, since we are using the standard Fourier expansion, the fast Fourier transform can be employed to expedite the computation. Thus the process is straightforward to apply and very efficient.  However, the error analysis of convolutional gridding is not yet fully understood, and in fact there are some instances for which the method fails to converge, \cite{Aditya09}. Below we provide a rigorous analysis of convolutional gridding by linking it to a numerical frame approximation for piecewise analytic functions $f$, \cite{Christensen2003}.  By applying the corresponding error analysis from \cite{SongGelb2013}, we are able to demonstrate the existence of DCFs and window functions to ensure the numerical convergence of \eqref{f-cg}. To this end, we first review the frame definition and the numerical frame approximation.

Let $\cH$ be a separable Hilbert space. We say $\{\varphi_j : j \in \bZ\}\subseteq \cH$ is a {\em frame} for $\cH$ if there exists some positive constants $A, B$ such that for all $f\in \cH$
\begin{equation*}
A\|f\|_\cH^2 \leq \sum_{j\in \bZ}|\langle f, \varphi_j \rangle|^2 \leq B\|f\|_\cH^2.
\end{equation*}
For a frame $\{\varphi_j : j \in \bZ\}\subseteq \cH$, its frame operator $S: \cH\rightarrow \cH$ is defined as
\begin{equation*}
S(f):= \sum_{j\in \bZ}\langle f, \varphi_j \rangle \varphi_j, \quad f\in \cH.
\end{equation*}
The standard frame expansion has the form
\begin{equation}\label{eq:frmexpansion}
f = S^{-1}Sf = \sum_{j\in \bZ}\langle f, \varphi_j \rangle \tilde{\varphi}_j
= \sum_{j\in \bZ} \hat{f}(\lambda_j) \tilde{\varphi}_j,
\end{equation}
where $\{\tilde{\varphi}_j:=S^{-1}\varphi_j: j\in \bZ\}$ is called the {\em (canonical) dual frame} and we have defined the frame coefficients as $\hat{f}(\lambda_j) := \langle f, \varphi_j \rangle$. 
In our investigation we will assume the sampling sequence $\{\varphi_j : j \in \bZ\}$ is a frame in $L^2[0,1]$.\footnote{We note that this is assumption is validated in \cite{BW00} for some non-Cartesian sampled MRI data.} 

Since typically the dual frame is not explicitly known, the (modified) frame algorithm, \cite{Christensen2003}, is often used to calculate (\ref{eq:frmexpansion}).  Other inverse frame operator approximation methods have also been developed, \cite{Casazza2000, MR1955936, Christensen2005,  Grochenig1993}.  Here we choose to use the method of {\em admissible frames}, \cite{SongGelb2013}, to obtain an approximation of $\tilde{\varphi}_j$.   This method  provides a link from (\ref{eq:frmexpansion}) to \eqref{f-cg} and therefore relates its corresponding convergence properties. 

Briefly, an admissible frame is designed to compute the inverse frame operator when the given frame may only be weakly localized (the off-diagonal decay has order less than $1$).  Specifically, we define \cite{SongGelb2013}:
\begin{defn}\label{admissible}
A frame $\{\psi_l: l\in\bZ\}$ is {\it admissible} with respect to the frame $\{\varphi_j: j\in \bZ\}$ if the following two conditions hold:
\begin{enumerate}[(i)]
\item There exist some positive constants $c_0$ and $t>1$ such that
\begin{equation*}
\left|\langle \psi_j, \psi_l\rangle \right|\leq c_0 (1+|j-l|)^{-t}, \quad j,l\in \bZ.
\end{equation*}

\item There exist some positive constants $c_1$ and $s>\frac{1}{2}$ such that
\begin{equation*}
\left|\langle \varphi_j, \psi_l\rangle \right|\leq c_1 (1+|j-l|)^{-s}, \quad j,l\in \bZ.
\end{equation*}
\end{enumerate}
\end{defn}

Now suppose $\{\psi_l: l\in\bZ\}$ is an admissible frame with respect to the frame $\{\varphi_j: j\in \bZ\}$.  As shown in \cite{SongGelb2013}, the dual frame $\tilde{\varphi}_j =S^{-1}\varphi_j$ can be approximated by
\begin{equation}\label{eq:approxdual}
\tilde{S}_n^{-1}\varphi_j:=\sum_{|l|\leq k}b_{l,j}\psi_l,
\end{equation}
where $\mB:=[b_{l,j}]_{|l|\leq k, |j|\leq n}$ is the Moore-Penrose pseudo-inverse of the matrix 
\begin{equation}\label{eq:mPsi}
\mPsi:=[\langle\varphi_j, \psi_l \rangle]_{|j|\leq n, |l|\leq k}.
\end{equation}
The relationship between $n$ and $k$ is given by
$
n=k+ c k^{\frac{1}{2s-1}},
$
where $c$ is a constant depending on $c_1$, $s$, and the frame bounds of $\{\varphi_j: j\in \bZ\}$, \cite{SongGelb2013}. 

The formula given in (\ref{eq:approxdual}) can be directly inserted into the truncated form of (\ref{eq:frmexpansion}) as an approximation of $f$, and, as shown in \cite{SongGelb2013}, the rate of convergence is of the same order as that of approximations using the new expansion frame, $\{\psi_l\}$.

As an example, consider the jittered samples (\ref{eq:jit_samp_exp}) with $\theta = \frac{1}{4}$ and its corresponding frame $\{\varphi_j(x) = \se^{2\pi i\lambda_j}:j \in \bZ\}$. The standard Fourier basis $\{\psi_l(x) = \se^{2\pi ilx}: l\in \bZ\}$ is admissible with respect to $\varphi_j$ for any positive number $t$ and $s=1$.

Substituting \eqref{eq:approxdual} into the frame expansion \eqref{eq:frmexpansion} and truncating the infinite summation of $j$ yields the following approximation of $f$:
\begin{equation}\label{f-frm}
A_{frm}(f):=  \sum_{|l|\leq m}\sum_{|j|\leq n} \hat{f}(\lambda_j)  b_{l,j} \psi_l.
\end{equation}
To link (\ref{f-frm})  to the convolutional gridding method, $A_{cg}(f)$ in \eqref{f-cg}, we take $\psi_l(x)=\frac{\se^{2\pi i lx}}{w(x)}$.

We next provide the error analysis for the frame approximation method, $A_{frm}(f)$, following the approach of admissible frames proposed in \cite{SongGelb2013}. To this end, we first show that $\{\psi_l: l \in \bZ\}$ is an admissible frame with respect to $\{\varphi_j:  j\in \bZ\}$.

\begin{prop}
\label{propRiesz}
If there exist some positive constants $\alpha_L$ and $\alpha_U$ such that $\alpha_L\leq w(x)\leq \alpha_U$ for all $x\in [0, 1]$, then $\{\psi_l(x)=\frac{\se^{2\pi i l x}}{w(x)}: l\in \bZ\}$ is a Riesz basis (and therefore also a frame, \cite{Christensen2003}) for $L^2[0,1]$ with frame bounds $1/\alpha_U^2$ and $1/\alpha_L^2$.
\end{prop}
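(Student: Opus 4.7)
The plan is to exploit the fact that multiplication by $1/w$ is a bounded, boundedly invertible operator on $L^2[0,1]$ whenever $w$ is bounded away from $0$ and $\infty$, so it sends the standard Fourier ONB to a Riesz basis. The proof has two very short pieces: a frame-bound (Parseval) computation, and a Riesz-basis argument via a bounded bijection.

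First I would observe that the hypothesis $\alpha_L \le w(x) \le \alpha_U$ implies $1/\alpha_U \le 1/w(x) \le 1/\alpha_L$, so the multiplication operator $M:L^2[0,1]\to L^2[0,1]$ defined by $(Mf)(x)=f(x)/w(x)$ is bounded with bounded inverse $M^{-1}f = wf$. Since $\{e_l(x)=\se^{2\pi i l x} : l\in\bZ\}$ is an orthonormal basis of $L^2[0,1]$ and $\psi_l = M e_l$, general Riesz-basis theory (see, e.g., Christensen~\cite{Christensen2003}) immediately gives that $\{\psi_l\}$ is a Riesz basis for $L^2[0,1]$.

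Second, to pin down the explicit frame bounds $1/\alpha_U^2$ and $1/\alpha_L^2$, I would compute directly. For $f\in L^2[0,1]$,
\begin{equation*}
\langle f, \psi_l\rangle = \int_0^1 f(x)\,\overline{\psi_l(x)}\,dx = \int_0^1 \frac{f(x)}{w(x)}\,\se^{-2\pi i l x}\,dx = \langle f/w, e_l\rangle,
\end{equation*}
using that $w$ is real-valued (which I would note is implicit since $w>0$). Then Parseval's identity for the Fourier ONB yields
\begin{equation*}
\sum_{l\in\bZ}|\langle f,\psi_l\rangle|^2 = \sum_{l\in\bZ}|\langle f/w, e_l\rangle|^2 = \|f/w\|_{L^2}^2 = \int_0^1 \frac{|f(x)|^2}{w(x)^2}\,dx.
\end{equation*}
Applying $\alpha_L\le w(x)\le\alpha_U$ pointwise inside the integral gives the two-sided bound
\begin{equation*}
\frac{1}{\alpha_U^2}\|f\|_{L^2}^2 \;\le\; \sum_{l\in\bZ}|\langle f,\psi_l\rangle|^2 \;\le\; \frac{1}{\alpha_L^2}\|f\|_{L^2}^2,
\end{equation*}
which is exactly the frame inequality with the stated bounds.

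There is no real obstacle here; the proof is essentially the observation that a bounded multiplication by a positive bounded function is a Hilbert-space isomorphism and therefore carries an ONB to a Riesz basis, together with a one-line Parseval computation for the explicit constants. The only small care needed is to verify the claim about real-valuedness of $w$ so that $\overline{\psi_l} = \se^{-2\pi i l x}/w$; this is given by the hypothesis $w(x)>0$.
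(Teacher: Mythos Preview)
Your proof is correct and follows essentially the same approach as the paper: both argue that the multiplication operator $f\mapsto f/w$ is a bounded bijection on $L^2[0,1]$, hence carries the Fourier orthonormal basis to a Riesz basis. The paper leaves the frame-bound computation as ``a direct calculation,'' whereas you spell out the Parseval argument explicitly, which is a welcome addition.
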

\begin{proof}
Recall that a Riesz basis is the image of a bounded invertible mapping of an orthonormal basis \cite{Christensen2003}. Since $\{\se^{2\pi i l x}: l\in \bZ\}$ is an orthonormal basis for $L^2[0,1]$ and the bound of $w(x)$ ensures that the mapping $U(f):=f/w$ is bounded invertible in $L^2[0,1]$, $\{\psi_l(x)=U(\se^{2\pi i l x}): l\in \bZ\}$ is a Riesz basis for $L^2[0,1]$.  The frame bounds follow from a direct calculation.
\end{proof}

We next discuss the conditions for which $\{\psi_l(x): l\in \bZ\}$ is an admissible frame.
\begin{prop}\label{prop:admissible}
If the 1-periodic extension of $w(x)$ is $C^p(\bR)$ smooth for some $p\in \bN$ and $w(x)> 0$ for any $x\in [0,1]$, then for the jittered sampling $\{\lambda_j: j\in \bZ\}$ given in (\ref{eq:jit_samp_exp}),  $\{\psi_l(x)=\frac{\se^{2\pi i l x}}{w(x)}: l\in \bZ\}$ is an admissible frame with respect to $\{\varphi_j(x) = \se^{2\pi i \lambda_j x} :  j\in \bZ\}$ with $t=p+1$ and $s=1$.
\end{prop}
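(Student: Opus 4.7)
The plan is to verify the two conditions of Definition \ref{admissible} directly by computing the relevant inner products as Fourier-type integrals and exploiting the smoothness and positivity hypotheses on $w$ together with the structure of the jittered sampling nodes $\lambda_j = j \pm \tau_j$ with $|\tau_j| \leq \theta < 1/2$ (here $\theta = 1/4$ suffices, as already required for the example in the paragraph preceding the proposition). Since $w$ is $1$-periodic and $C^p(\mathbb{R})$ with $\alpha_L \leq w \leq \alpha_U$ on $[0,1]$, both $1/w$ and $1/w^2$ inherit the same periodicity, positivity bounds, and $C^p$-regularity. Thus on both sides I will reduce to bounding Fourier-type coefficients of smooth periodic functions.

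For condition (i), I would first write
\begin{equation*}
\langle \psi_j, \psi_l \rangle = \int_0^1 \frac{\se^{2\pi i (j-l) x}}{w(x)^2}\, dx,
\end{equation*}
which is the $(j-l)$-th Fourier coefficient of the $1$-periodic function $h(x) := 1/w(x)^2$. Since $h$ inherits the $C^p(\mathbb{R})$ periodicity of $w$, I would integrate by parts $p$ times; the periodicity of $h$ kills every boundary term, producing
\begin{equation*}
\langle \psi_j, \psi_l \rangle = \frac{1}{(2\pi i (j-l))^p} \int_0^1 h^{(p)}(x)\, \se^{2\pi i (j-l)x}\, dx, \qquad j\neq l,
\end{equation*}
and I would pick up the extra factor of $(1+|j-l|)^{-1}$ needed for the stated exponent $t=p+1$ by a further integration-by-parts / Riemann-Lebesgue argument on $h^{(p)}$ (using its continuity and periodicity on the compact interval $[0,1]$), together with a routine estimate for the case $j=l$ that is absorbed into the constant $c_0$ via the factor $(1+|j-l|)^{-t}$.

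For condition (ii) I would compute
\begin{equation*}
\langle \varphi_j, \psi_l \rangle = \int_0^1 \frac{\se^{2\pi i(\lambda_j-l)x}}{w(x)}\,dx
\end{equation*}
and set $\mu := \lambda_j - l$. The key observation is that because the jitter satisfies $|\tau_j|\leq 1/4$, we have $|\mu| \geq |j-l|-1/4 \geq \tfrac{3}{4}|j-l|$ for $|j-l|\geq 1$, so $|\mu|$ is comparable to $1+|j-l|$. A single integration by parts gives
\begin{equation*}
\langle \varphi_j, \psi_l \rangle = \frac{1}{2\pi i \mu}\left[\frac{\se^{2\pi i\mu x}}{w(x)}\right]_0^1 + \frac{1}{2\pi i \mu}\int_0^1 \frac{w'(x)}{w(x)^2}\se^{2\pi i\mu x}\,dx,
\end{equation*}
and both pieces are controlled in absolute value by a constant times $1/|\mu|$, the first via periodicity of $w$ (so $w(0)=w(1)$) and the lower bound $\alpha_L$, and the second via $\|w'/w^2\|_{L^1[0,1]} \leq \|w'\|_\infty/\alpha_L^2$. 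This yields the decay $(1+|j-l|)^{-1}$, i.e., $s=1$, uniformly in $j$.

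The main obstacle, and the reason (ii) only yields $s=1$ rather than a higher exponent, is that $\se^{2\pi i \mu x}$ is \emph{not} $1$-periodic when $\mu=\lambda_j-l$ is not an integer, so the boundary term after one integration by parts does not vanish. Iterating the integration by parts would keep producing boundary terms of size $O(1/|\mu|)$, none of which cancel, so no additional decay can be extracted by this route; obtaining $s=1$ (which is already enough to meet the $s>1/2$ requirement of admissibility) is essentially sharp under jittered sampling, and carefully bookkeeping this non-vanishing boundary contribution, together with translating the bound on $|\mu|$ into a bound in $|j-l|$, will be the delicate point of the argument.
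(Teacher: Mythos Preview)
Your proposal is correct and follows essentially the same route as the paper: both conditions are checked by writing the inner products as Fourier-type integrals of $1/w^2$ and $1/w$, invoking the $C^p$ regularity for (i) and a single integration by parts together with the bound $|\lambda_j-l|\asymp |j-l|$ from the jitter constraint for (ii). Your treatment of (ii) is in fact more explicit than the paper's, which simply asserts the $O((1+|\lambda_j-l|)^{-1})$ bound from boundedness of $1/w$ and then notes that the jittered structure immediately gives $s=1$; one small caveat is that your ``further Riemann--Lebesgue argument'' on $h^{(p)}$ does not by itself deliver the extra power needed for $t=p+1$ (Riemann--Lebesgue gives only $o(1)$, not $O(|k|^{-1})$), and the paper handles this step by citing the classical decay result for Fourier coefficients of $C^p$ periodic functions rather than reproving it.
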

\begin{proof}
We will check the two admissible conditions in Definition \ref{admissible} directly.  For $j,l\in \bN$, we have
\begin{equation}
\langle\psi_j, \psi_l \rangle = \int_{0}^1 \frac{1}{w^2(x)} \se^{2\pi i (j-l) x}dx.
\label{eq:inner}
\end{equation}
Since $w\in C^p(\bR)$ and $w(x)>0$ for all $x\in [0,1]$, we have $1/w^2\in C^p(\bR)$. Note that (\ref{eq:inner}) can be viewed as the $(l - j)$th Fourier coefficient of $1/w^2$. If a function is in $C^p(\bR)$, then its Fourier coefficients decay as $p+1$, \cite{Gasquet1999, GO77}. Thus $\{\psi_l(x): l\in \bZ\}$ satisfies the first admissible condition with $t=p+1$.

To check the second admissible condition, we note that
\begin{equation*}
\langle\varphi_j, \psi_l \rangle = \int_{0}^1 \frac{1}{w(x)} \se^{2\pi i (\lambda_j -l ) x}dx.
\end{equation*}
By assumption, $1/w$ is bounded on $[0,1]$. Hence for $j,l\in \bN$, it is clear that that for some $c > 0$ we have
\begin{equation*}
|\langle\varphi_j, \psi_l \rangle|\leq c(1+|\lambda_j -l|)^{-1}.
\end{equation*}
The second admissibility condition is readily verified for $s = 1$ in the jittered sampling case, (\ref{eq:jit_samp_exp}).
\end{proof}

The convergence results for the frame approximation method $A_{frm}(f)$ can now be given. In what follows, when it is clear from the context, we will use $\|\cdot\|$ denote all norms. This notation includes the $L^2[0,1]$ norm $(\int_0^1 |f(x)|^2 dx)^{1/2}$ on a function $f$, the Euclidean norm of a vector, or the spectral norm of a matrix.

\begin{thm}
\label{thm:frameapprox}
Suppose the 1-periodic extension of $w(x)$ is $C^p(\bR)$ smooth for some $p\in \bN$ and there exists a positive constant $c_0$ such that $|\hat{f}(l)| \leq c_0 (1+|l|)^{-p-1}$ for all $l\in \bZ$. If there exist positive constants $\alpha_L$ and $\alpha_U$ such that $\alpha_L\leq w(x)\leq \alpha_U$ for any $x\in [0,1]$, then for the jittered sampling case, $\{\lambda_j: j\in \bZ\}$, with $m=(1+2c_1^2\alpha_U^4)n$ in (\ref{f-frm}), we have
\begin{equation*}
\|f-A_{frm}(f)\|\leq c n^{-p-1/2} 
\end{equation*}
for some positive constant $c>0$. 
\end{thm}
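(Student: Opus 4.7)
The plan is to cast Theorem \ref{thm:frameapprox} as a direct application of the admissible-frame approximation framework of \cite{SongGelb2013}. By Propositions \ref{propRiesz} and \ref{prop:admissible}, the system $\{\psi_l: l\in \bZ\}$ with $\psi_l(x)=\se^{2\pi i l x}/w(x)$ is a Riesz basis of $L^2[0,1]$ with bounds $1/\alpha_U^2$ and $1/\alpha_L^2$ and is admissible with respect to the jittered-sampling Fourier frame $\{\varphi_j\}$ with parameters $t=p+1$ and $s=1$; since $p\geq 1$ gives $t>1$ and $s>1/2$, the hypotheses needed to invoke the convergence theorem from \cite{SongGelb2013} are in force.

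I would then split the error as
\begin{equation*}
\|f-A_{frm}(f)\| \;\leq\; \|f-P_m f\| + \|P_m f - A_{frm}(f)\|,
\end{equation*}
where $P_m f := \sum_{|l|\leq m}\hat{g}(l)\psi_l$ and $g := fw$. Since $g\in L^2[0,1]$ expands as $g=\sum_l \hat{g}(l)\se^{2\pi ilx}$ and division by $w$ is a bounded operation (because $\alpha_L\leq w\leq \alpha_U$), we have $f=\sum_l \hat{g}(l)\psi_l$ in $L^2[0,1]$. Thus the first term on the right is the Riesz-expansion truncation error, and the second is the error incurred by replacing the exact coefficients $\hat{g}(l)$ by their pseudo-inverse approximation $\sum_{|j|\leq n}\hat{f}(\lambda_j)b_{l,j}$ from \eqref{eq:approxdual}.

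For the truncation term, the key estimate is $|\hat{g}(l)|\leq C(1+|l|)^{-p-1}$. This follows from the convolution identity $\hat{g}(l)=\sum_k \hat{f}(k)\hat{w}(l-k)$, the hypothesis on $\hat{f}$, and the bound $|\hat{w}(l)|\leq C'(1+|l|)^{-p-1}$ obtained from $w\in C^p(\bR)$ exactly as in the proof of Proposition \ref{prop:admissible}. Since $p+1>1$, the standard convolution estimate for algebraically decaying sequences yields the claimed bound on $\hat{g}$. Using the upper Riesz bound $1/\alpha_L^2$,
\begin{equation*}
\|f-P_m f\|^2 \;\leq\; \frac{1}{\alpha_L^2}\sum_{|l|>m}|\hat{g}(l)|^2 \;=\; O(m^{-(2p+1)}),
\end{equation*}
so $\|f-P_m f\|=O(m^{-p-1/2})$.

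The dual-frame error $\|P_m f - A_{frm}(f)\|$ is handled by the pseudo-inverse stability estimate from \cite{SongGelb2013}, which under the admissibility parameters $t=p+1$, $s=1$ and the prescribed relation $m=(1+2c_1^2\alpha_U^4)n$ between the truncation level and the sampling size gives an error of the same order as the best $\psi_l$-approximation, namely $O(n^{-p-1/2})$. Combining this with the truncation estimate and noting that $m$ is proportional to $n$ yields the asserted bound. The main technical obstacle is transferring the pseudo-inverse conditioning estimate from \cite{SongGelb2013} with the correct explicit constants (matching the constant $c$ appearing in $n=k+ck^{1/(2s-1)}$ to the quantity $2c_1^2\alpha_U^4$ determined by $c_1$ and the frame bounds of $\{\psi_l\}$) and verifying that the dual-frame error does not degrade the rate $m^{-p-1/2}$ coming from the truncation analysis.
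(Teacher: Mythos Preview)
Your proposal is correct and follows essentially the same route as the paper: verify via Propositions~\ref{propRiesz} and~\ref{prop:admissible} that $\{\psi_l\}$ is an admissible frame with respect to $\{\varphi_j\}$ with $t=p+1$, $s=1$, and then invoke the convergence theorem from \cite{SongGelb2013}. The paper's own proof is in fact just this citation (``the desired result follows immediately from Theorem~5.1 in \cite{SongGelb2013}''); your truncation/dual-frame error splitting and the $\hat g$ decay estimate amount to a partial unpacking of that external theorem rather than a different argument.
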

\begin{proof}
From Proposition \ref{prop:admissible}, we know $\{\psi_l(x)=\frac{\se^{2\pi i l x}}{w(x)}: l\in \bZ\}$ is an admissible frame with respect to $\{\varphi_j(x):  j\in \bZ\}$ with $t=p+1$ and $s=1$. The desired result follows immediately from Theorem 5.1 in \cite{SongGelb2013}.\footnote{Theorem 5.1 in \cite{SongGelb2013} requires the use of several technical propositions and is therefore omitted here.}
\end{proof}
Note that Theorem \ref{thm:frameapprox} does not apply to the case of log sampling, since in general its corresponding sequence does not constitute a frame.  However, as mentioned in the introduction, in \cite{BW00} it was suggested that some MRI sampling patterns may yield Fourier frames.

\subsection{Linking Convolutional Gridding and the Fourier Frame Approximation}
\label{sec:linkingmethods}
We now have two different approaches for approximating piecewise smooth $f$  in $[0,1]$ from non-uniform Fourier data -- the convolutional gridding method, $A_{cg}(f)$ in (\ref{f-cg}), and the Fourier frame approximation, $A_{frm}(f)$ in (\ref{f-frm}).  In order to link the two approximations, we first slightly modify the summation in $A_{cg}(f)$ to make the summation index consistent, and define
\begin{equation}\label{tf-cg}
\tA_{cg}(f): = \sum_{|j|\leq n}\,\, \sum_{|l|\leq m}\alpha_j \hat{f}(\lambda_j) \hat{w}(l-\lambda_j) \frac{\se^{2\pi ilx}}{w(x)}.
\end{equation}
When $\hat{w}$ decays fast or is compactly supported, the above modification should not significantly change the computational results or the convergence analysis.  

In order to compare $A_{frm}(f)$ to the (modified) convolutional gridding method $\tA_{cg}(f)$, we first rewrite both methods using matrix notation. By defining the vector $\vhf:=(\hat{f}(\lambda_j): |j|\leq n)$, (\ref{tf-cg}) yields
\begin{equation}\label{eq:Acg}
\tA_{cg}(f) = \sum_{|l|\leq m}c_l\psi_l, \quad \vc = (c_l: -m\leq l\leq m)= \mOmega \mLambda \vhf,
\end{equation}
where $\mOmega=[\hat{w}(l-\lambda_j)]_{|l|\leq m, |j|\leq n}$ and $\mLambda = \diag(\alpha_j: |j|\leq n)$. Likewise, we rewrite $A_{frm}$ in \eqref{f-frm} as
\begin{equation}\label{eq:Afrm}
A_{frm}(f) = \sum_{|l|\leq m}d_l\psi_l, \quad \vd =  (d_l: -m\leq l\leq m) = \mB \vhf = \mPsi^{\dagger}\vhf,
\end{equation}
where $\mPsi^{\dagger}$ is the Moore-Penrose pseudo-inverse of $\mPsi$ in \eqref{eq:mPsi}.

Theorem \ref{thm:frameapprox} provides convergence results for $A_{frm}(f)$.  We are now able to establish the difference between the  approximations  $\tA_{cg}(f)$ and $A_{frm}(f)$.
\begin{thm}
\label{thm:link}
Suppose there exist some positive constants $\alpha_L$ and $\alpha_U$ such that $\alpha_L\leq w(x)\leq \alpha_U$ for all $x\in [0, 1]$. Then there exists a positive constant $c$ such that
\begin{equation*}
\|\tA_{cg}(f) - A_{frm}(f)\| \leq  \frac{1}{\alpha_L} \|(\mPsi^* \mPsi)^{-1}\| \|\mPsi^* \mPsi \Omega \mLambda -\mPsi^* \|_{F} \|\vhf\|,
\end{equation*}
where $\|\cdot\|_F$ is the Frobenius norm and $\mPsi^*$ is the conjugate transpose of $\mPsi$.
\end{thm}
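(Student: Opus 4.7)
The plan is to reduce the norm inequality on functions to a matrix/vector inequality on the coefficient sequences, exploit the Riesz basis structure of $\{\psi_l\}$ established in Proposition \ref{propRiesz}, and then algebraically manipulate the formula for the Moore--Penrose pseudo-inverse.

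First I would use the formulas in \eqref{eq:Acg} and \eqref{eq:Afrm} to write
\begin{equation*}
\tA_{cg}(f) - A_{frm}(f) = \sum_{|l|\leq m} (c_l - d_l)\psi_l.
\end{equation*}
Since Proposition \ref{propRiesz} asserts that $\{\psi_l\}$ is a Riesz basis with lower bound $1/\alpha_U^2$ and upper bound $1/\alpha_L^2$, the upper frame/Riesz inequality applied to the finite linear combination above gives
\begin{equation*}
\|\tA_{cg}(f) - A_{frm}(f)\| \leq \frac{1}{\alpha_L}\|\vc - \vd\|,
\end{equation*}
converting the function-space error into a Euclidean error on coefficients.

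Next I would assemble $\vc - \vd$ from \eqref{eq:Acg} and \eqref{eq:Afrm}. Assuming $\mPsi$ has full column rank (which is implicit from the admissibility setup), the Moore--Penrose pseudo-inverse satisfies $\mPsi^\dagger = (\mPsi^*\mPsi)^{-1}\mPsi^*$, so
\begin{equation*}
\vc - \vd = \mOmega\mLambda\vhf - (\mPsi^*\mPsi)^{-1}\mPsi^*\vhf = (\mPsi^*\mPsi)^{-1}\bigl[\mPsi^*\mPsi\,\mOmega\mLambda - \mPsi^*\bigr]\vhf,
\end{equation*}
where I have pulled out $(\mPsi^*\mPsi)^{-1}$ by writing $\mOmega\mLambda = (\mPsi^*\mPsi)^{-1}(\mPsi^*\mPsi)\mOmega\mLambda$. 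Applying submultiplicativity of the spectral norm and the standard bound $\|M\| \leq \|M\|_F$, I obtain
\begin{equation*}
\|\vc - \vd\| \leq \|(\mPsi^*\mPsi)^{-1}\|\,\|\mPsi^*\mPsi\,\mOmega\mLambda - \mPsi^*\|_F\,\|\vhf\|.
\end{equation*}
Chaining this with the previous inequality yields the claimed bound.

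The proof is essentially a manipulation of matrix identities, so there is no major obstacle; the only point requiring a small justification is that $(\mPsi^*\mPsi)^{-1}$ makes sense, i.e.\ that $\mPsi$ has full column rank. This should follow from the admissibility of $\{\psi_l\}$ with respect to $\{\varphi_j\}$ established in Proposition \ref{prop:admissible} together with the Riesz basis property of $\{\psi_l\}$; if needed this is the place to insert a brief remark (or a reference to the analogous matrix-conditioning argument in \cite{SongGelb2013}). Everything else is the standard chain of Riesz-bound plus submultiplicative norm inequalities.
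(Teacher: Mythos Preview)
Your proposal is correct and follows essentially the same route as the paper: both reduce the function-norm error to $\frac{1}{\alpha_L}\|\vc-\vd\|$ (you via the Riesz upper bound of Proposition~\ref{propRiesz}, the paper by factoring out $1/w$ and using Parseval for $\{\se^{2\pi ilx}\}$, which is the same computation), and then both perform the identical factorization $\vc-\vd=(\mPsi^*\mPsi)^{-1}(\mPsi^*\mPsi\,\mOmega\mLambda-\mPsi^*)\vhf$ followed by submultiplicativity and $\|\cdot\|\le\|\cdot\|_F$. Your remark about the implicit full-column-rank assumption on $\mPsi$ is apt; the paper uses $(\mPsi^*\mPsi)^{-1}$ without comment.
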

\begin{proof}
From (\ref{eq:Acg}) and (\ref{eq:Afrm}) we have
\begin{equation*}
\|\tA_{cg}(f) - A_{frm}(f)\|=\biggl\|\sum_{|l|\leq m}(c_l -d_l) \psi_l \biggr\| = \biggl\|\sum_{|l|\leq m}(c_l -d_l) \se^{2\pi i lx}\frac{1}{w(x)} \biggr\|.
\end{equation*}
It follows that
\begin{equation*}
\|\tA_{cg}(f) - A_{frm}(f)\|\leq \|1/w \|_{\infty}  \left\|\sum_{|l|\leq m}(c_l -d_l) \se^{2\pi i lx} \right\| \leq \frac{1}{\alpha_L} \|\vc- \vd\|.
\end{equation*}
To estimate $\|\vc- \vd\|$, we observe that
\begin{equation*}
\vc -\vd = ( \mOmega \mLambda - \mPsi^{\dagger})\vhf = \left( \mOmega \mLambda -(\mPsi^* \mPsi)^{-1} \mPsi^* \right)\vhf =  (\mPsi^* \mPsi)^{-1} (\mPsi^* \mPsi \Omega \mLambda -\mPsi^* ) \vhf.
\end{equation*}
Therefore
\begin{equation*}
\|\vc- \vd\|\leq \|(\mPsi^* \mPsi)^{-1}\| \|\mPsi^* \mPsi \Omega \mLambda -\mPsi^* \|_{F} \|\vhf\|,
\end{equation*}
which finishes the proof.
\end{proof}
\begin{rem}
Theorem \ref{thm:link} depends upon the representation of the dual frame given by \eqref{eq:approxdual}.  Various versions of the frame algorithm in \cite{Christensen2003} do not yield the form needed in (\ref{eq:Afrm}).
\end{rem}
\begin{rem}
Theorem \ref{thm:link} actually suggests a way to select optimal DCFs in (\ref{eq:ghatl}).  Specifically, for any given sampling sequence $\{\lambda_j\}_{j = -n}^n$ and weight function $w$, the matrices $\mPsi$ and $\mOmega$ are both fixed.  Hence the optimal $\mLambda$ satisfies
\begin{equation}\label{eq:optimalDCF}
\mathop{\argmin}\limits_{\valpha\in \bR^{2n+1}}\|\mPsi^* \mPsi \Omega \mLambda -\mPsi^* \|_{F},
\end{equation}
where $\|\cdot\|_F$ is the Frobenius norm.  The DCFs, $\{\alpha_j\}_{j = -n}^n,$ are the diagonal elements in the diagonal matrix $\mLambda$. In fact, we have
\begin{lem}
Let $D=\diag (\alpha_j: -n\leq j\leq n)$, where $\alpha_j$, $-n \leq j \leq n$ are the  DCFs in \eqref{eq:optimalDCF}. Then the optimal DCFs $\alpha^\prime_j$ are given by
\begin{equation*}
\alpha^\prime_j = \frac{\langle \mPsi^*_j, \mT_j\rangle}{\|\mT_j\|^2},
\end{equation*}
where $\mPsi^*_j$ and $\mT_j$ are the $j$-th column of $\mPsi^* $ and $\mPsi^* \mPsi \Omega$ respectively.  
\end{lem}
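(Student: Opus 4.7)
The plan is to exploit the diagonal structure of $\mLambda$ to decouple the Frobenius-norm minimization into $2n+1$ independent one-dimensional least-squares problems, each of which has an explicit closed-form solution.

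First I would set $\mT:=\mPsi^*\mPsi\mOmega$ so that the objective becomes $\|\mT\mLambda-\mPsi^*\|_F^2$. The key observation is that right-multiplication by a diagonal matrix $\mLambda=\diag(\alpha_j)$ simply scales the $j$-th column of $\mT$ by $\alpha_j$. Writing the Frobenius norm as a sum of squared column norms gives
\begin{equation*}
\|\mT\mLambda-\mPsi^*\|_F^2 \;=\; \sum_{|j|\leq n}\bigl\|\alpha_j\mT_j-\mPsi^*_j\bigr\|^2,
\end{equation*}
where the summands are completely uncoupled in the unknowns $\alpha_j$.

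Next I would minimize each summand separately as a scalar least-squares problem for $\alpha_j\in\bR$ (or $\bC$, as appropriate). Expanding
\begin{equation*}
\bigl\|\alpha_j\mT_j-\mPsi^*_j\bigr\|^2 \;=\; \alpha_j^2\|\mT_j\|^2 - 2\alpha_j\,\mathrm{Re}\langle\mPsi^*_j,\mT_j\rangle + \|\mPsi^*_j\|^2
\end{equation*}
and differentiating with respect to $\alpha_j$ (or, equivalently, invoking the one-dimensional normal equation $\alpha_j\|\mT_j\|^2=\langle\mPsi^*_j,\mT_j\rangle$) yields
\begin{equation*}
\alpha_j' \;=\; \frac{\langle\mPsi^*_j,\mT_j\rangle}{\|\mT_j\|^2},
\end{equation*}
provided $\mT_j\neq 0$. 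Assembling these minimizers into a diagonal matrix reproduces the optimal $\mLambda$ stated in the lemma.

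The only real subtlety is the decoupling argument; one must be sure that restricting to diagonal $\mLambda$ is compatible with minimizing each column contribution independently (which it is, since each $\alpha_j$ affects only the $j$-th column of $\mT\mLambda$). After that observation, the remaining steps are routine scalar calculus, so no serious obstacle is anticipated. A minor caveat worth noting is the degenerate case $\mT_j=0$, where the $j$-th summand is independent of $\alpha_j$ and any choice is optimal; this will not affect the formula whenever $\mOmega$ and $\mPsi$ are generic, as is the case in the sampling scenarios under consideration.
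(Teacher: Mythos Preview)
Your proposal is correct and follows essentially the same route as the paper: both decompose the Frobenius norm as $\|\mT\mLambda-\mPsi^*\|_F^2=\sum_{|j|\le n}\|\alpha_j\mT_j-\mPsi^*_j\|^2$ and minimize each decoupled scalar term separately. Your write-up is in fact more detailed than the paper's one-line proof, and your remark on the degenerate case $\mT_j=0$ is a reasonable extra caution.
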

\begin{proof}
The result follows from calculating
\begin{equation*}
\|\mPsi^* \mPsi \Omega \mLambda -\mPsi^* \|_{F}^2 = \sum_{j=-n}^n \|\alpha_j\mT_j - \mPsi^*_j \|^2
\end{equation*}
as the solution to (\ref{eq:optimalDCF}).
\end{proof}
\end{rem}

\begin{rem}
By defining $\alpha_j$ as the diagonal elements of $\mLambda$, Theorem \ref{thm:link} allows us to exploit the frame approximation to obtain the optimal DCFs as they appear in the traditional convolutional gridding algorithm.  However, the results can be improved by allowing $\mLambda$ to be banded instead of diagonal. In particular, in this case we view the DCFs as linear combinations of weights used in approximating the finite frame approximation of $f$ from its sampled non-uniform Fourier data, as opposed to viewing them as numerical quadrature weights that approximate the inverse Fourier transform.  For example, suppose $\mLambda$ is a $(2r-1)$-banded matrix with $r-1$ sub-diagonals above and below the main diagonal respectively.  The optimal banded matrix is determined by solving
\begin{equation}\label{eq:gcg}
\argmin\left\{\|\mPsi^* \mPsi \Omega \mLambda -\mPsi^* \|_{F}: \mLambda \mbox{ is $(2r-1)$-banded} \right\}.
\end{equation}

If we define $\mT:=\mPsi^* \mPsi \mOmega$,  it follows that
\begin{equation*}
\|\mT \mLambda -\mPsi^*\|_{F}^2 = \sum_{j=1}^n \|\mT \md_j - \mPsi^*_j\|^2,
\end{equation*}
where $\md_j$ are the column vectors of $\mLambda$.  The solution to \eqref{eq:gcg} is also the minimizer of $\|\mT \md_j - \mPsi^*_j\|^2$ for each $j$. This is a standard least-squares problem with the explicit solution $\md_j=\mT^\dagger \mPsi^*_j$. Note that the number of non-zero components in each $\md_j$ is at most $2r-1$. That is, we actually need only to calculate the Moore-Penrose pseudo-inverse of the corresponding block (at most $(2r-1)\times (2r-1)$) instead of the whole matrix $\mT$.

The computational cost of calculating $\mLambda$ is $O(nr^2)$.  Once $\mLambda$ is determined, the cost of evaluating $f$ using the FFT is $\mathcal{O}(n\log n)$. Note that the computational cost of determining the DCFs would still be consistent with that of the FFT if we choose $r=\mathcal{O}(\log n)$. In fact, we observe in some of our experiments in Section \ref{sec:numerics} that numerical convergence, especially in the log sampling case, requires $r$ to grow with $n$.  Indeed, the method may not converge for constant $r$.
\end{rem}

Figures \ref{fig:dcf:jit} and \ref{fig:dcf:log}  compare the DCFs generated by (A) the trapezoidal rule, (\ref{eq:simple_dcf}),  (B) the iterative procedure proposed in \cite{pipe}, and (C) our new frame theoretic convolutional gridding approach for both the jittered (\ref{eq:jit_samp_exp}) with $\theta = \frac{1}{4}$ and log sampling cases. As discussed in Section \ref{sec:convgridding}, iterative techniques typically yield more suitable  DCFs than the quadrature weights, such as the trapezoidal rule. As is evident here, our new method generates comparable DCFs as those constructed by iterative procedures, although it has less computational complexity.  Note that although the corresponding sequence for the log sampling scheme does not constitute a frame, the same algorithm used to generate the optimal (frame theoretic) DCFs can still be applied.  In Section \ref{sec:numerics}, we will also demonstrate that the numerical approximation using the frame theoretic convolutional gridding method for log sampling sequences also yield high accuracy for smooth functions.
\begin{figure}[htbp]
        \subfloat[Trapezoidal]{\includegraphics[scale = .25]{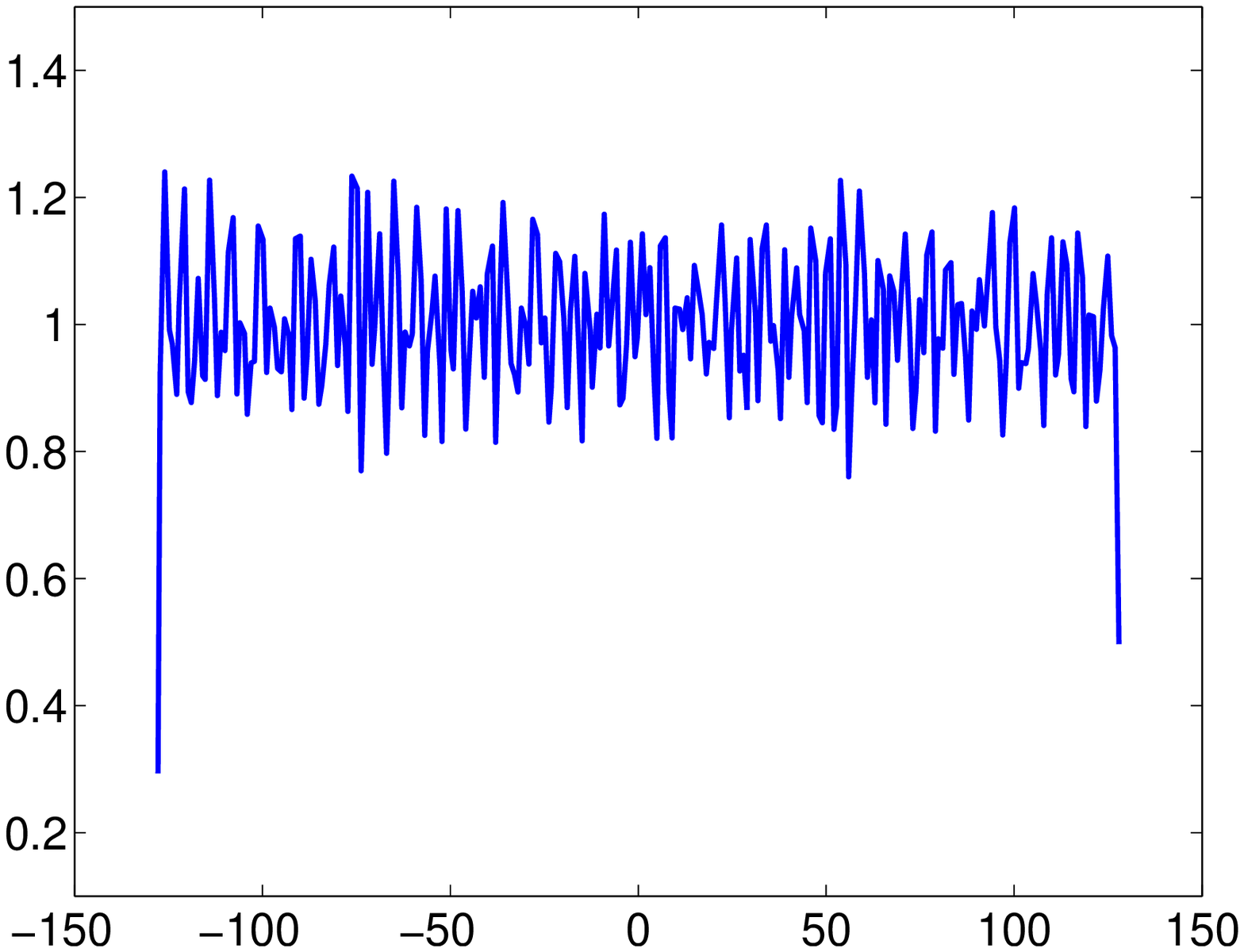}}
        \subfloat[Iterative]{\includegraphics[scale = .25]{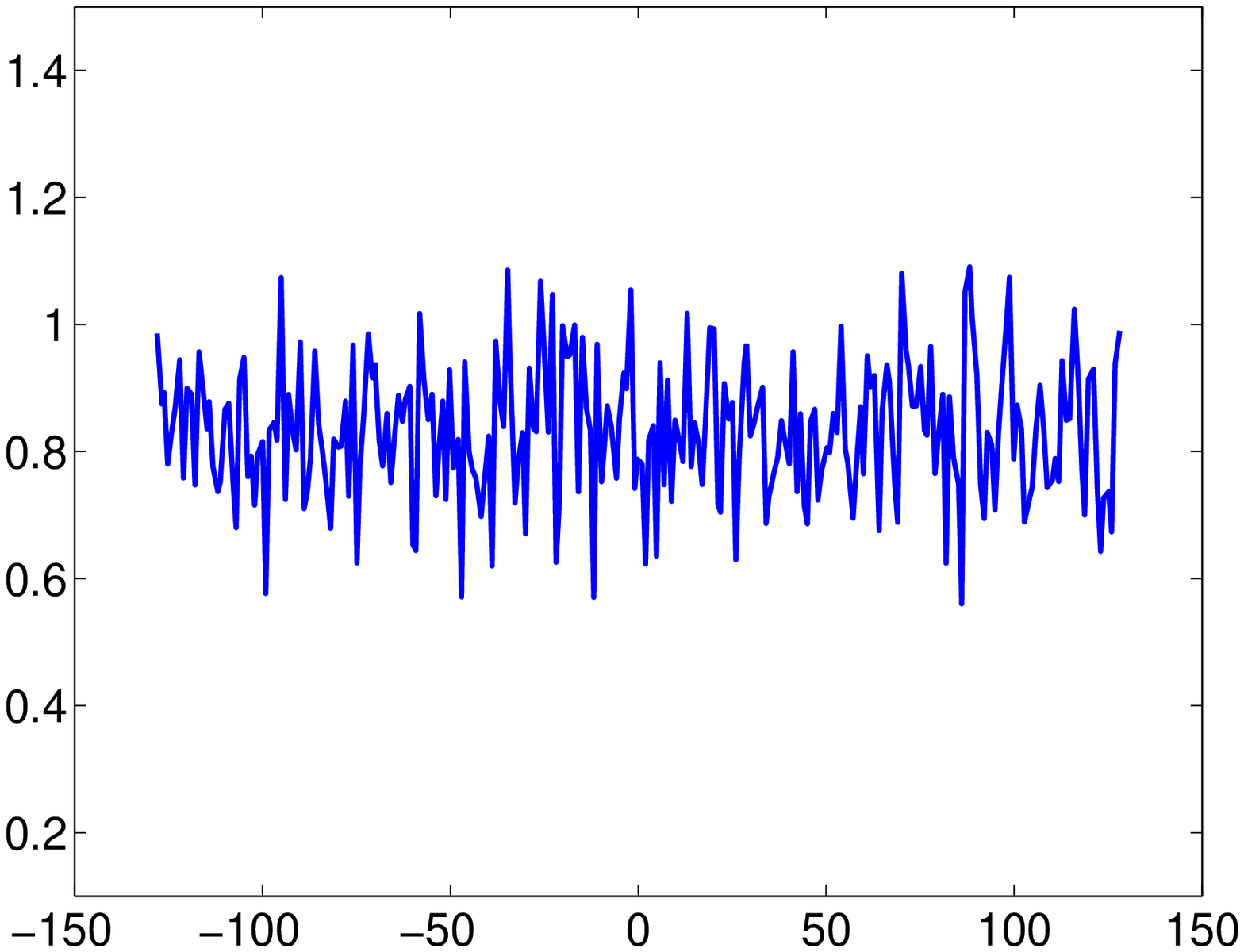}}
        \subfloat[Frame Theoretic]{\includegraphics[scale = .25]{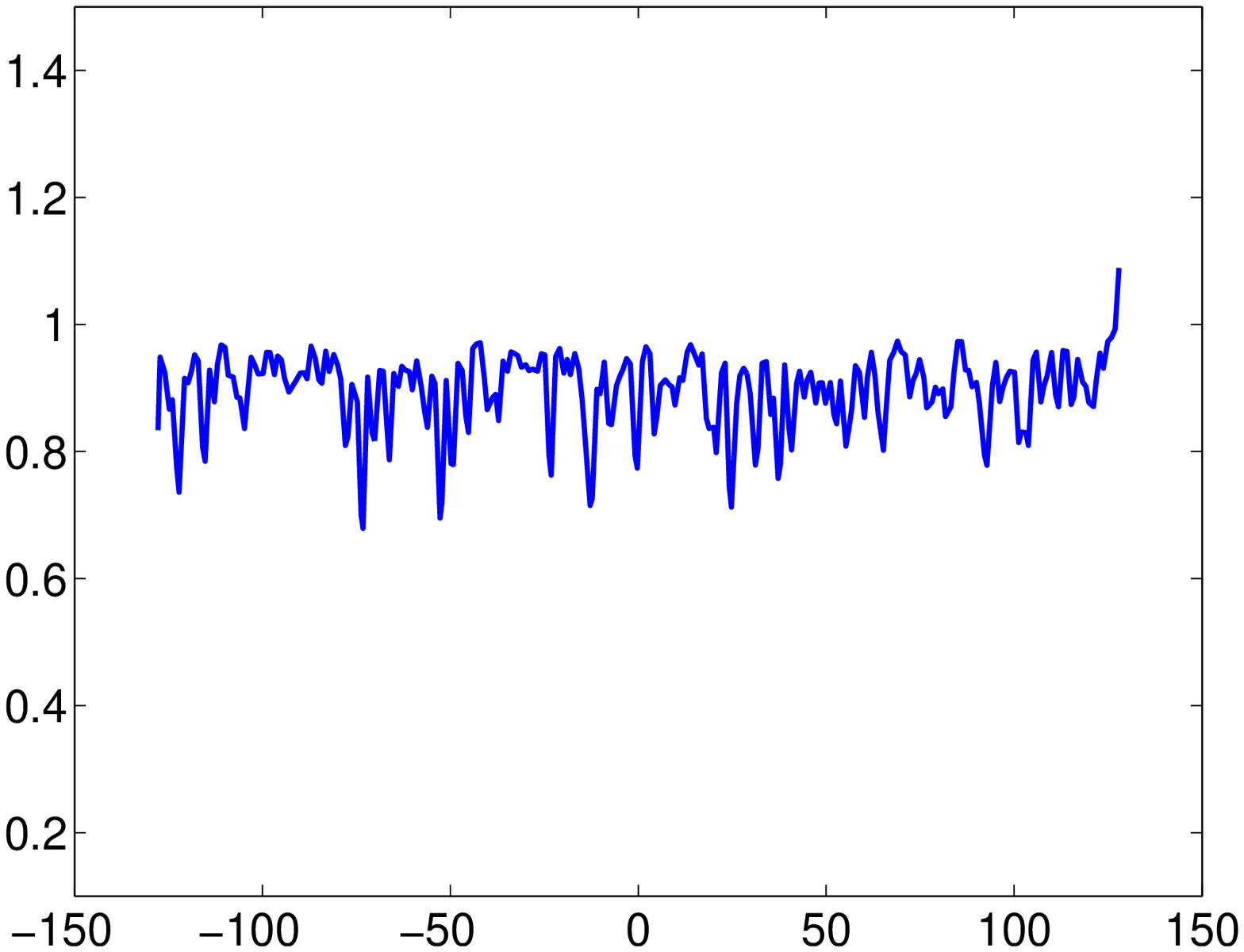}}
        \caption{DCFs for jittered sampling with $n=128$. We used the window function $w(x) = \se^{-a|x-.5|}$ where $a = 5\times 10^{-5}$ for (B) and (C).}
\label{fig:dcf:jit}
\end{figure}

\begin{figure}[htbp]
        \subfloat[Trapezoidal]{\includegraphics[scale = .25]{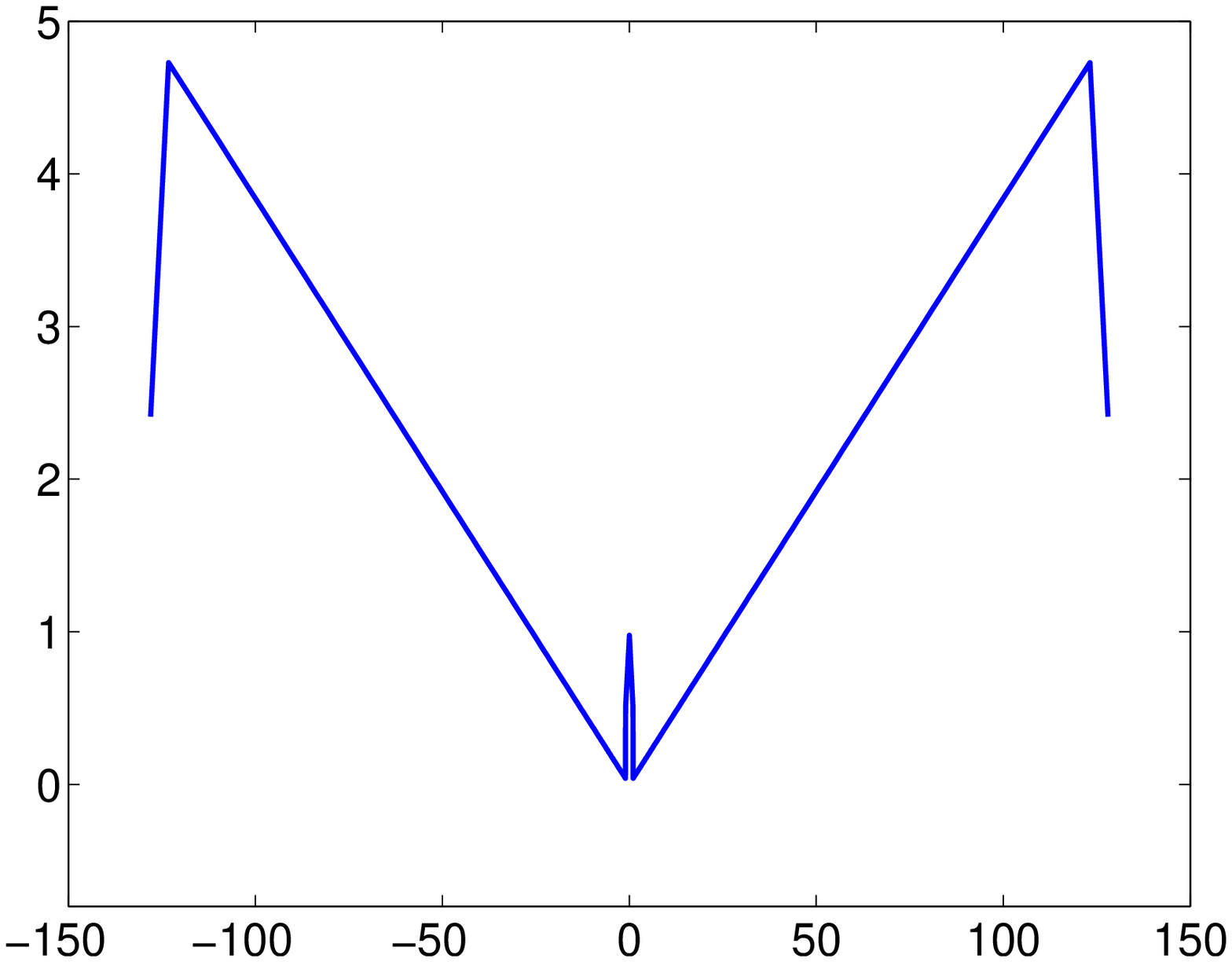}}
        \subfloat[Iterative]{\includegraphics[scale = .25]{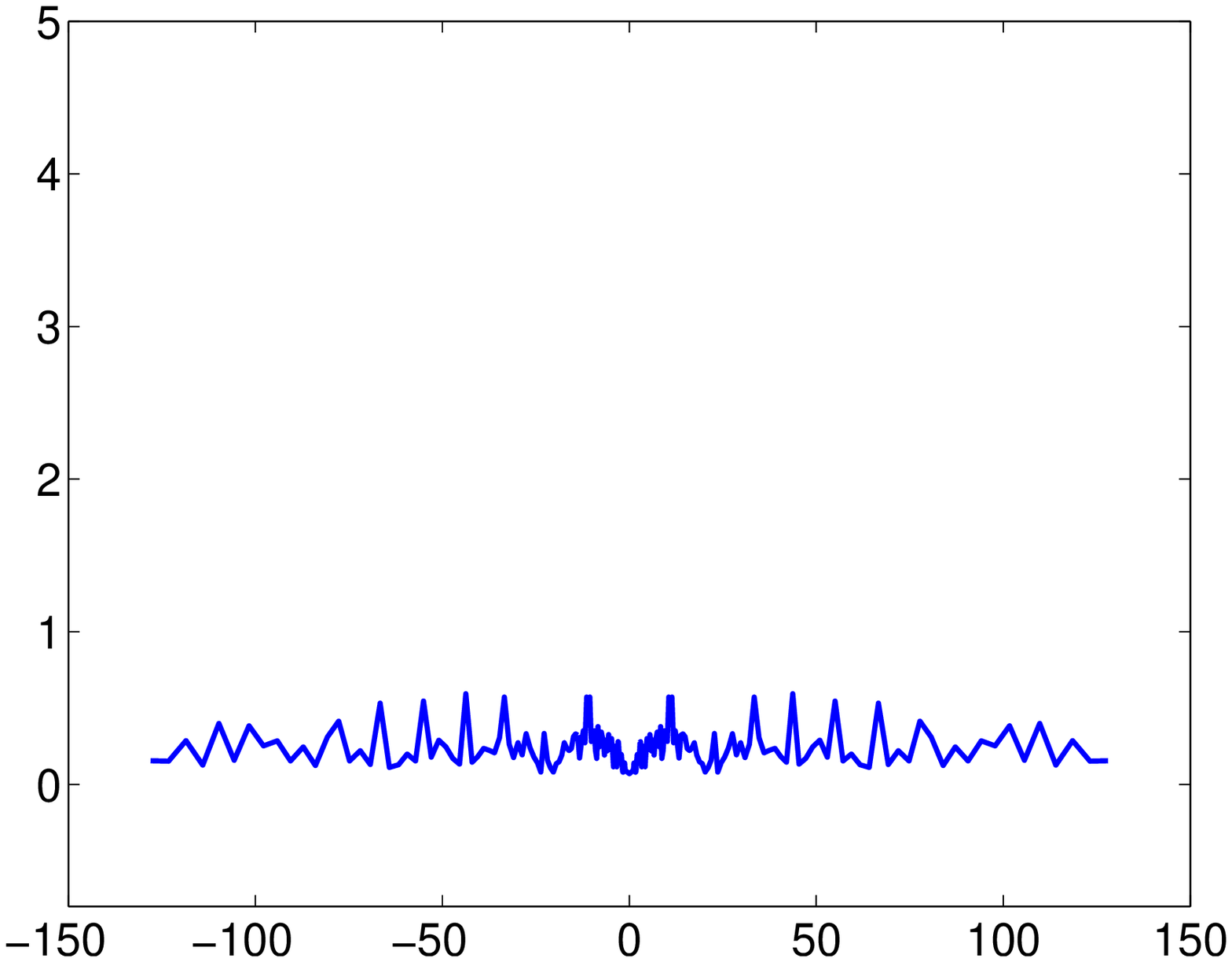}}
        \subfloat[Frame Theoretic]{\includegraphics[scale = .25]{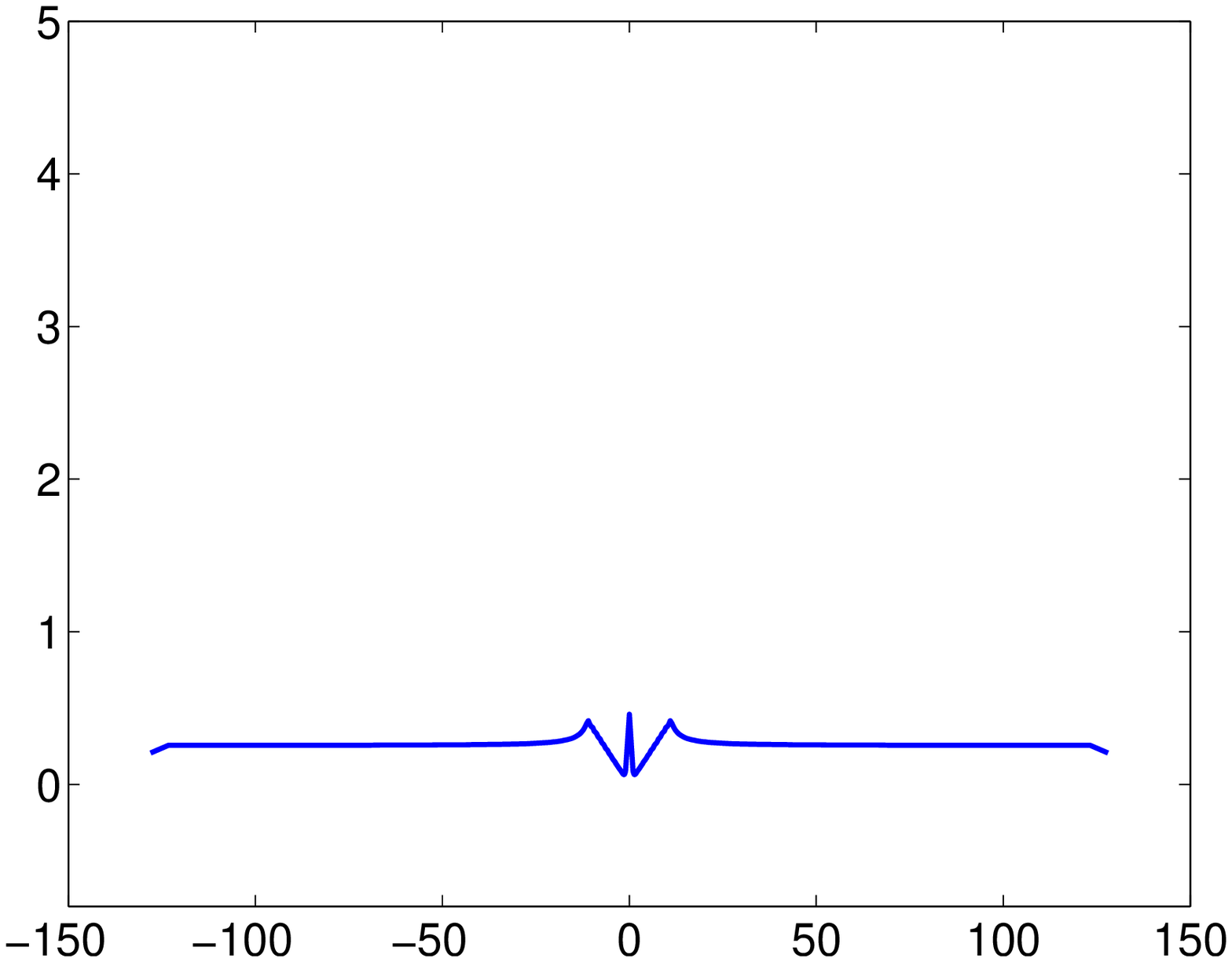}}
        \caption{DCFs for log sampling with $n=128$. We used the window function $w(x) = \se^{-a|x-.5|}$ where $a = 5\times 10^{-5}$ for (B) and (C).}
\label{fig:dcf:log}
\end{figure}

\begin{rem}
Figure \ref{fig:dcf:jit} suggests that interpreting the reconstruction of $f$ as an approximation to its inverse Fourier transform and subsequently applying a quadrature formula (of general order) is suitable when the data are close to uniform.  On the other hand, when the given data are highly non-uniform, Figure \ref{fig:dcf:log} shows that standard quadrature rules ``compensate'' by {\em increasing} the DCF values in the sparse regions.  Unfortunately, in many signal processing applications the high frequency data are likely to contain noise  which should be mollified rather than enhanced.   Both the iterative and frame theoretic approaches yield more evenly weighted DCFs.  We point out that the iterative approach involves solving a full $(2n-1)\times (2n-1)$ linear system,\footnote{In some sense, the iterative DCF approach in \cite{pipe} attempts to construct a kernel from the non-harmonic series expansion that resembles a regularized Dirac delta function.  This is not the main focus of our current investigation, but will be analyzed in more detail in future work.} while the frame theoretic approach only solves $2n-1$ much smaller $(2r-1)\times (2r-1)$ linear systems. In particular we use the diagonal matrix, that is, $r = 1$, to generate Figures \ref{fig:dcf:jit} and \ref{fig:dcf:log} for the frame theoretic approach.
\end{rem}

\section{Applications to Edge Detection}
\label{sec:edgedetection}
Edge detection is important in many signal processing and imaging applications.  For example, in MRI, tissue classification inherently depends on the ability to determine the internal boundaries of an image.  In what follows we show how our new frame theoretic convolutional gridding algorithm can be applied to locate jump discontinuities in piecewise smooth functions when data are sampled non-uniformly in the Fourier domain.\footnote{We describe only the one-dimensional case, where an edge is equivalent to a jump discontinuity.  Our technique may be adapted to two dimensions if we write (\ref{eq:bump}) as a separable function. Results in this direction can be found in \cite{MGG}.}   Determining jump locations from uniformly sampled Fourier data is in itself a difficult problem, mainly because an edge is a local feature while the Fourier data are of a global nature.  While there are a variety of algorithms that have been developed to do this,  they are in general not easily adapted to the non-uniform case.  In particular, any kind of interpolation (regridding) procedure produces oscillations that do not have extractable information.  A notable exception can be found in \cite{SVGR}, where the the method from \cite{GT00} was adapted successfully for non-uniform sampling by employing a sparsity enforcing minimization procedure. However, analyzing the corresponding error is difficult, and moreover, the technique relies on solving an inverse problem which may be computationally intensive or even fail to converge.

A frame theoretic approach to edge detection from non-uniform Fourier data was first suggested in \cite{GelbHines2}.  That method relies on the (modified) frame algorithm, e.g. \cite{Christensen2003},  to compute the dual frame, which may also be computationally intensive or ill-conditioned.  Here we extend the ideas from \cite{GelbHines2}, but replace the frame algorithm  with the more efficient convolutional gridding algorithm.  The link between the Fourier frame approximation and convolutional gridding allows us to do this. 

Consider a piecewise continuous function $f: [0,1]\rightarrow \bR$ with $f(0) = f(1) = 0$. We will define
the associated {\em jump function} $[f]:(0,1) \to \bR$ by

\begin{equation}\label{eq:jumpfunction}
[f](x) = \lim_{t \downarrow x}f(t) - \lim_{t \uparrow x}f(t).
\end{equation}
Notice that $[f]$ is well-defined since $f$ is piecewise continuous. Furthermore, $[f] = 0$ everywhere except at the jump discontinuity locations, where $[f]$ takes on the value of the jump.  For simplicity of presentation, we assume that $f$ has a single jump discontinuity (edge) $\xi\in (0,1)$. In our numerical examples we will demonstrate that the method works to recover multiple jump locations, that is wherever $[f](x) \ne 0$.

Let us again assume that $\{\varphi_j(x)=\se^{2\pi i \lambda_j x}: j\in \bZ\}$ is a frame for $L^2[0,1]$.   We seek the jump location, $\xi$, and its corresponding jump value, $[f](\xi)$, from the given data $\hvf:= \{\hf(\lambda_j)=\langle f, \varphi_j \rangle: -n\leq j\leq n\}$.  In constructing a numerical algorithm to determine the jump discontinuities of $f$, it will be more convenient to write (\ref{eq:jumpfunction}) as
\begin{equation}\label{eq:jumpfunction2}
[f](x) = [f](\xi)\delta_\xi(x),
\end{equation}
where $\delta_\xi(x)$ is the indicator function taking the value $1$ when $x=\xi$ and $0$ otherwise.  For multiple jumps, (\ref{eq:jumpfunction2}) is correspondingly $[f](x) = \sum_{j= 1}^J [f](\xi_j)\delta_{\xi_j}(x),$ where $J$ is the number of jump discontinuities in $f$.

Since the indicator function is nontrivial only at a single point whose set has zero measure, it is not practical to directly construct a meaningful approximation in $L^2[0,1]$. Therefore, we regularize (\ref{eq:jumpfunction2}) and instead seek to approximate
\begin{equation}\label{eq:jumpfunction_reg}
[f](x) \approx  [f](\xi)h_n(x),
\end{equation}
where
\begin{equation}\label{eq:bump}
h_n(x):=h\bigl(\frac{x-\xi}{\epsilon_n}\bigr), \epsilon_n > 0,
\end{equation}
with $h(0)=1$.  We choose $h$ to be a compactly supported bump function that approximates $\delta_\xi(x)$ such that $h_n$ is supported in $[\xi -\epsilon_n, \xi+\epsilon_n]$ with $h_{n}(\xi) =1$. As $\epsilon_n \downarrow 0$, $h_{n}(x) \rightarrow \delta_\xi(x)$.  Numerical considerations dictate that $h$ is smooth (e.g. bell-shaped) around $0$ and $\epsilon_n\rightarrow 0$ as $n\rightarrow \infty$. We remark that the choice of $\epsilon_n$  is critical in how the method performs.  As $\epsilon_n$ increases, the approximation is more regularized, but the edges are not as well localized. This trade off can be decided on a case by case basis, depending on other external influences, such as the amount of and corruption in the data, see e.g. \cite{cochran2013edge, GelbHines2}. 

We now implement the ideas from the frame theoretic convolutional gridding algorithm to construct an approximation to (\ref{eq:jumpfunction_reg}) from the given finite information $\hvf=\{\hf(\lambda_j): |j|\leq n\}$.  To this end, we first establish a relationship between the Fourier coefficients of the regularized indicator function, $\widehat{h_{n}}(\lambda_j)$, and the given data $\hf(\lambda_j)$.   For $\lambda_j \ne 0$, integration by parts on the given Fourier samples (\ref{eq:foursamples}) yields
\begin{eqnarray}
\hf_j&=&\frac{1}{2\pi i \lambda_j}[f](\xi) \se^{-2\pi i \lambda_j \xi} + \frac{1}{2\pi i \lambda_j}\int_0^1 f^\prime(x) \se^{-2\pi i \lambda_j x}dx, \quad -n\leq j\leq n\nonumber\\
&=&\frac{1}{2\pi i \lambda_j}[f](\xi) \se^{-2\pi i \lambda_j \xi} + \mathcal{O}\bigl(\frac{1}{\lambda_j^2}\bigr).
\label{eq:integration}
\end{eqnarray}
Thus, for large $\lambda_j$, we have
\begin{equation}\label{equation:fxi}
2\pi i \lambda_j \hf_j \se^{2\pi i \lambda_j \xi} \approx  [f](\xi).
\end{equation}
On the other hand, direct calculation of the Fourier coefficients in (\ref{eq:jumpfunction_reg}) gives
\begin{equation*}
\widehat{h_{n}}(\lambda_j) = \int_{-\infty}^\infty h\left( \frac{x-\xi}{\epsilon_n}\right) \se^{-2\pi i \lambda_j x}dx = 
\epsilon_n \se^{-2\pi i \lambda_j \xi}\int_{-\infty}^\infty h(u) \se^{-2\pi i \lambda_j u \epsilon_n}dx = \epsilon_n \se^{-2\pi i \lambda_j \xi} \hat{h}(\lambda_j \epsilon_n).
\end{equation*}
Finally, from  \eqref{equation:fxi} we have
\begin{equation}\label{eq:hnapprox}
[f](\xi)\widehat{h_{n}}(\lambda_j) \approx 2\pi i \lambda_j \epsilon_n \hf_j  \hat{h}(\lambda_j \epsilon_n),
\end{equation}
implying that we can obtain an approximation of $[f](\xi)\widehat{h_{n}}(\lambda_j)$ directly from the given data $\hf_j$.

We now formulate our edge detection algorithm as follows: Suppose we are given 
\begin{equation*}
\vhh:=(2\pi i \lambda_j \epsilon_n \hf_j  \hat{h}(\lambda_j \epsilon_n): |j|\leq n ),
\end{equation*}
i.e., an approximation to the non-uniform Fourier coefficients of the regularized jump function (\ref{eq:jumpfunction_reg}).  Following the frame theoretic convolutional gridding approach, we use (\ref{eq:Acg}) to obtain the approximation of $[f](\xi)h_n(x)$ as 
\begin{equation*}
E_{cg}(f): = \sum_{|l|\leq m}c_l\psi_l, \quad \vc = \mOmega \mLambda \vhh.
\end{equation*}
Likewise, the frame approximation method (\ref{eq:Afrm}) yields an approximation to  $[f](\xi)h_n$ as
\begin{equation*}
E_{frm}(f): = \sum_{|l|\leq m}d_l\psi_l, \quad \vd = \mPsi^{\dagger}\vhh.
\end{equation*}

Analogous results to Theorem \ref{thm:link} hold here, although the overall accuracy for recovering the edges strongly depends on the the error incurred in approximating (\ref{eq:integration}) by (\ref{equation:fxi}), which clearly increases for multiple jumps.  Once again the DCFs can be optimally chosen using (\ref{eq:gcg}).  We demonstrate our results in Section \ref{sec:numerics}.

\begin{rem}
The frame theoretic convolutional gridding approach for edge detection given non-uniform Fourier data has two distinct advantages.  First, it does not require interpolation to uniform nodes.  It is evident from (\ref{eq:integration}) that edge information comes from the high frequency part of the Fourier domain.  Unfortunately, the sampling patterns in applications such as MRI are typically sparse in the high frequency domain, and the algorithms that interpolate these data to integer coefficients are neither accurate nor robust, \cite{Aditya09}.  By contrast, the proposed frame theoretic convolutional gridding approach yields much smaller errors in the high frequency domain.  Second,  the frame theoretic convolutional gridding method is a forward operation that can be efficiently and accurately implemented using the FFT. This is in contrast to the frame based edge detection method introduced in  \cite{GelbHines2}, which requires approximating an inverse frame operator.
\end{rem}

\section{Numerical experiments}
\label{sec:numerics}
The examples below are used to demonstrate the frame theoretic convolutional gridding approach for function reconstruction and edge detection.  In each case we compare our new method to the traditional convolutional gridding method, $A_{cg}$ in (\ref{f-cg}), and the frame approximation method, $A_{frm}$ in (\ref{f-frm}).\footnote{There are no discernible discrepancies when $A_{cg}$ in (\ref{f-cg}) replaces the sum that is analyzed, $\tilde{A}_{cg}$ in  (\ref{tf-cg}).}  
Before presenting our numerical experiments, it is important to analyze the relationship between these three methods. The traditional convolutional gridding method shares the same form as the frame theoretic convolutional gridding approach with $r=1$, that is, when we use diagonal approximation in \eqref{eq:gcg}.  Both are linear methods, with the frame theoretic convolutional gridding construction providing a new algorithm for choosing optimal DCFs.  When $r > 1$, the frame theoretic convolutional gridding method can be viewed as a generalization of the traditional convolutional gridding approach since it allows the  use of banded rather than diagonal matrices.   Note that since the frame theoretic convolutional gridding approach is an approximation of the frame approximation, \eqref{f-frm}, the best convergence we can hope to achieve is that obtained by \eqref{f-frm}.   However, the standard frame approximation requires solving a $(2n-1)\times (2n-1)$ linear system, while the proposed frame theoretic convolutional gridding approach  must solve $2n-1$ smaller $(2r-1)\times (2r-1)$ linear systems.  As $r$ increases, we get closer to the frame approximation in \eqref{f-frm}.  Thus we see the expected trade off in computational efficiency and numerical convergence.  

For convenience, in all of our experiments  we use the window function
\begin{equation}\label{eq:windowfunction}
w(x)=\se^{-a|x - 0.5|},
\end{equation} 
where $a=5\times 10^{-5}$.  We point out that (\ref{eq:windowfunction}) is only one such as example that can be used in Theorem \ref{thm:link}, and that the choice of window function may influence the numerical approximation, a topic that will be explored in future work.

\begin{example}\label{ex:example1}
{\rm Consider the target function}
\begin{equation*}
f(x)=\cos^2(\pi(x-0.5)^2) \sin(10(x-0.5)^2), \quad x\in [0,1].
\end{equation*}
\end{example}

\begin{figure}[htbp]
\subfloat[Recovered functions]{\includegraphics[scale=0.35]{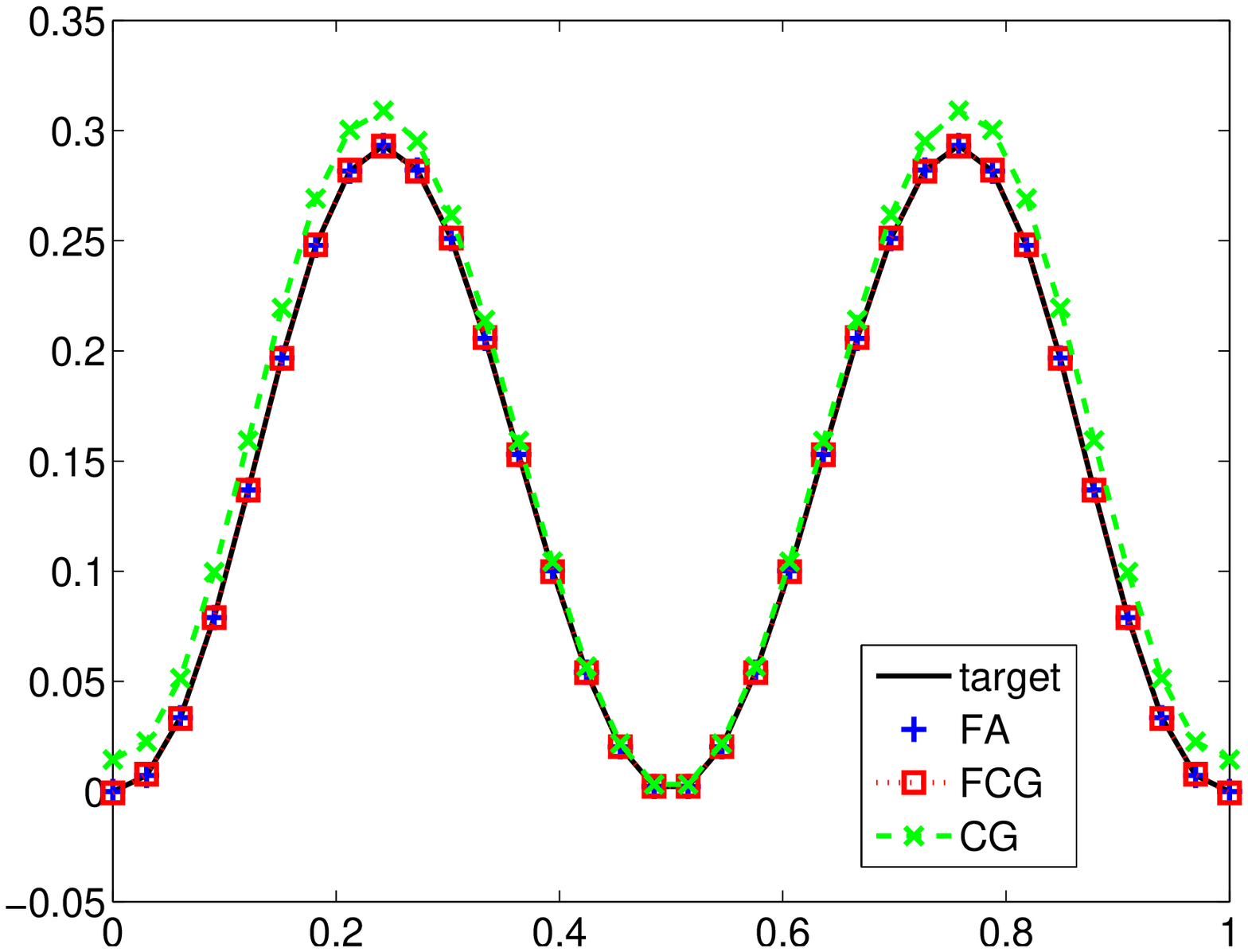}}
\qquad
\subfloat[Errors to the true function]{\includegraphics[scale=0.35]{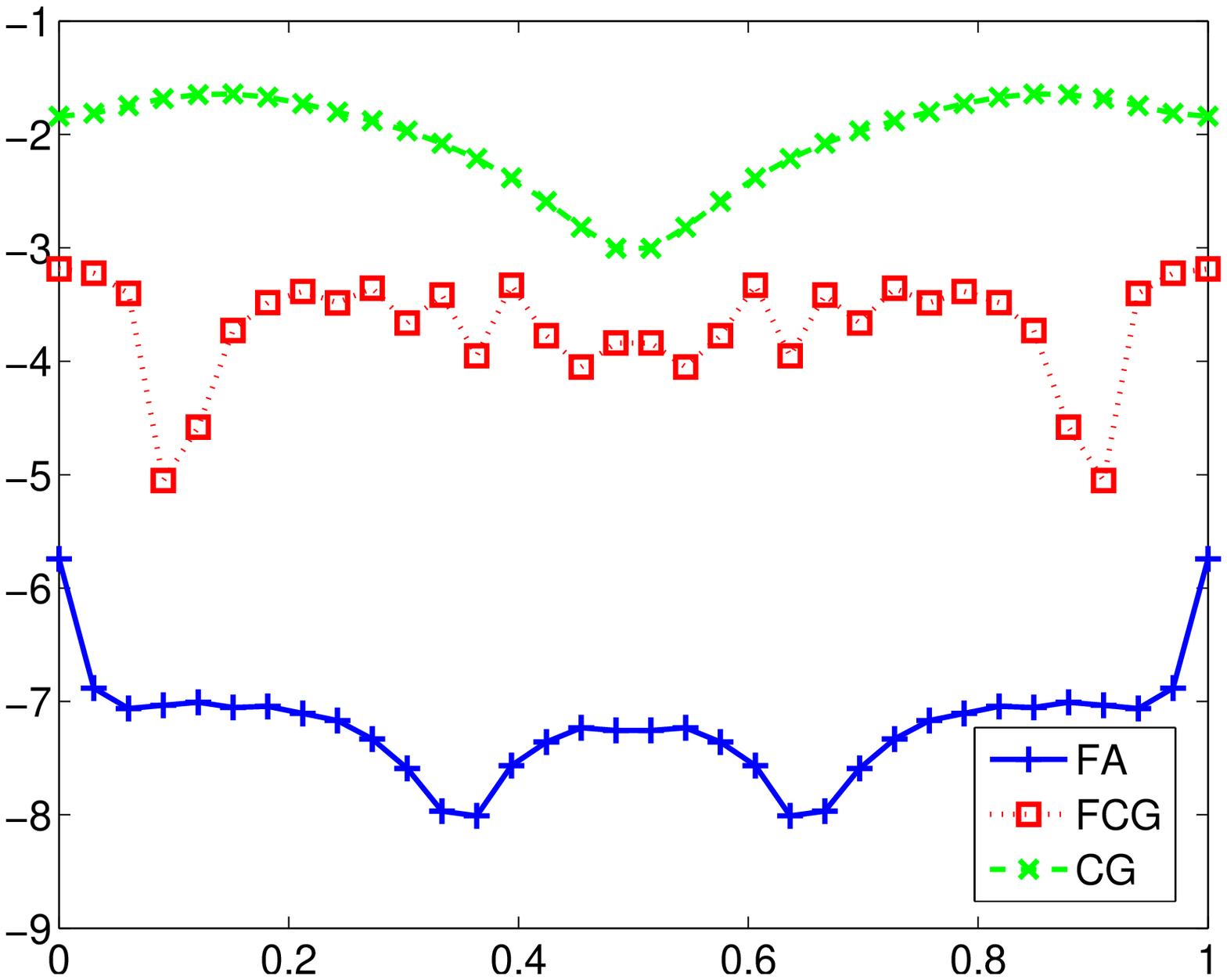}}
\caption{Comparisons of recovered functions (A) and pointwise errors (in log) (B) employing the frame approximation method (FA), the traditional convolutional gridding method (CG), and the frame theoretic convolutional gridding (FCG) for jittered sampling with $\theta = \frac{1}{4}$ and $n = 200$.}\label{fig1}
\end{figure}

Figure \ref{fig1} displays the approximation results for jittered sampling, (\ref{eq:jit_samp_exp}), with $\theta = \frac{1}{4}$.  We compare
the frame approximation method (FA), \eqref{f-frm}, the convolutional gridding method (CG), \eqref{f-cg}, and the new frame theoretic convolutional gridding (FCG), \eqref{eq:Acg}, for $r=3$. For simplicity we employ the trapezoidal rule, (\ref{eq:simple_dcf}), to determine the DCFs for the traditional convolutional gridding method (CG). 

\begin{figure}[htbp]
\subfloat[Recovered functions]{\includegraphics[scale=0.35]{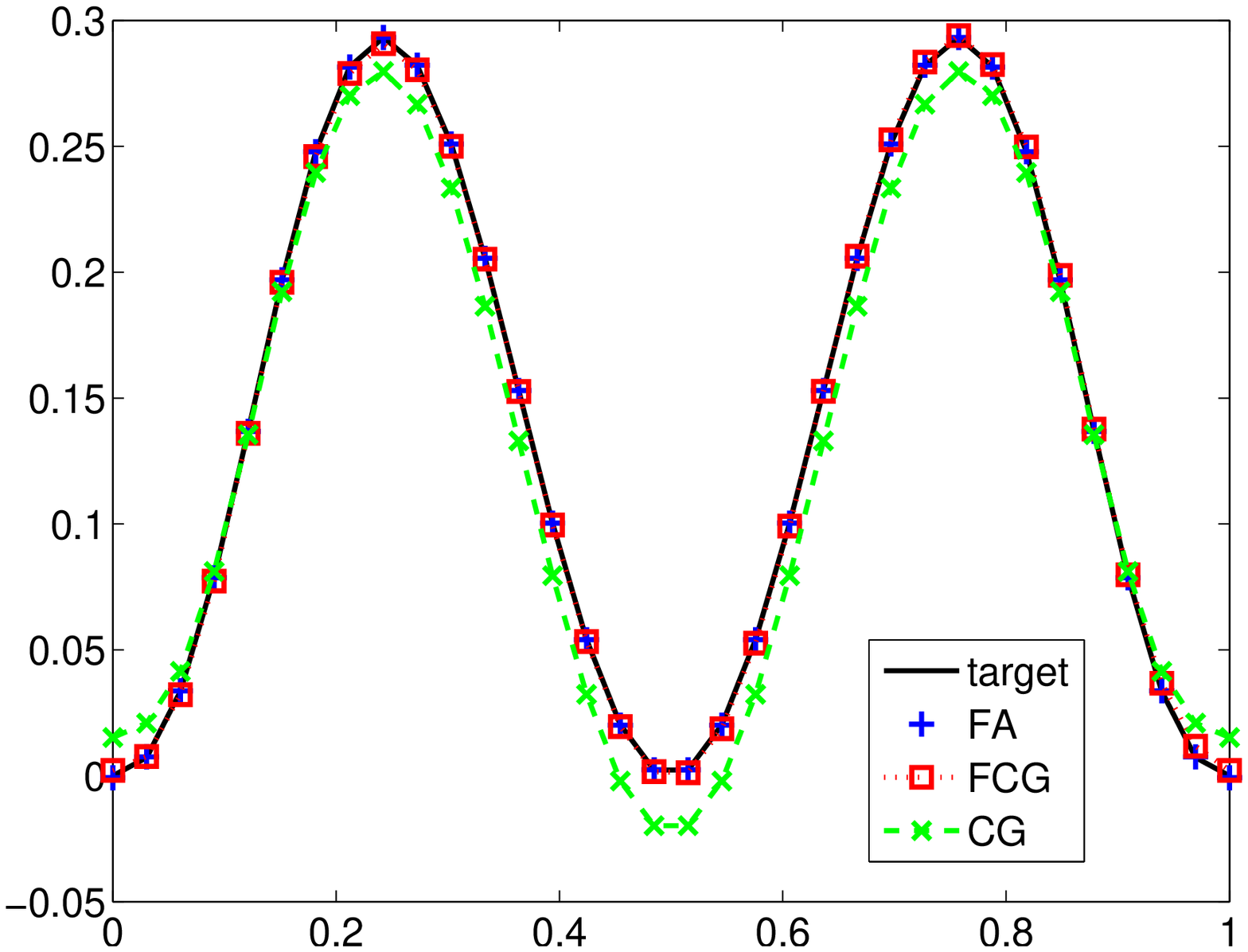}}
\qquad
\subfloat[Errors to the true function]{\includegraphics[scale=0.35]{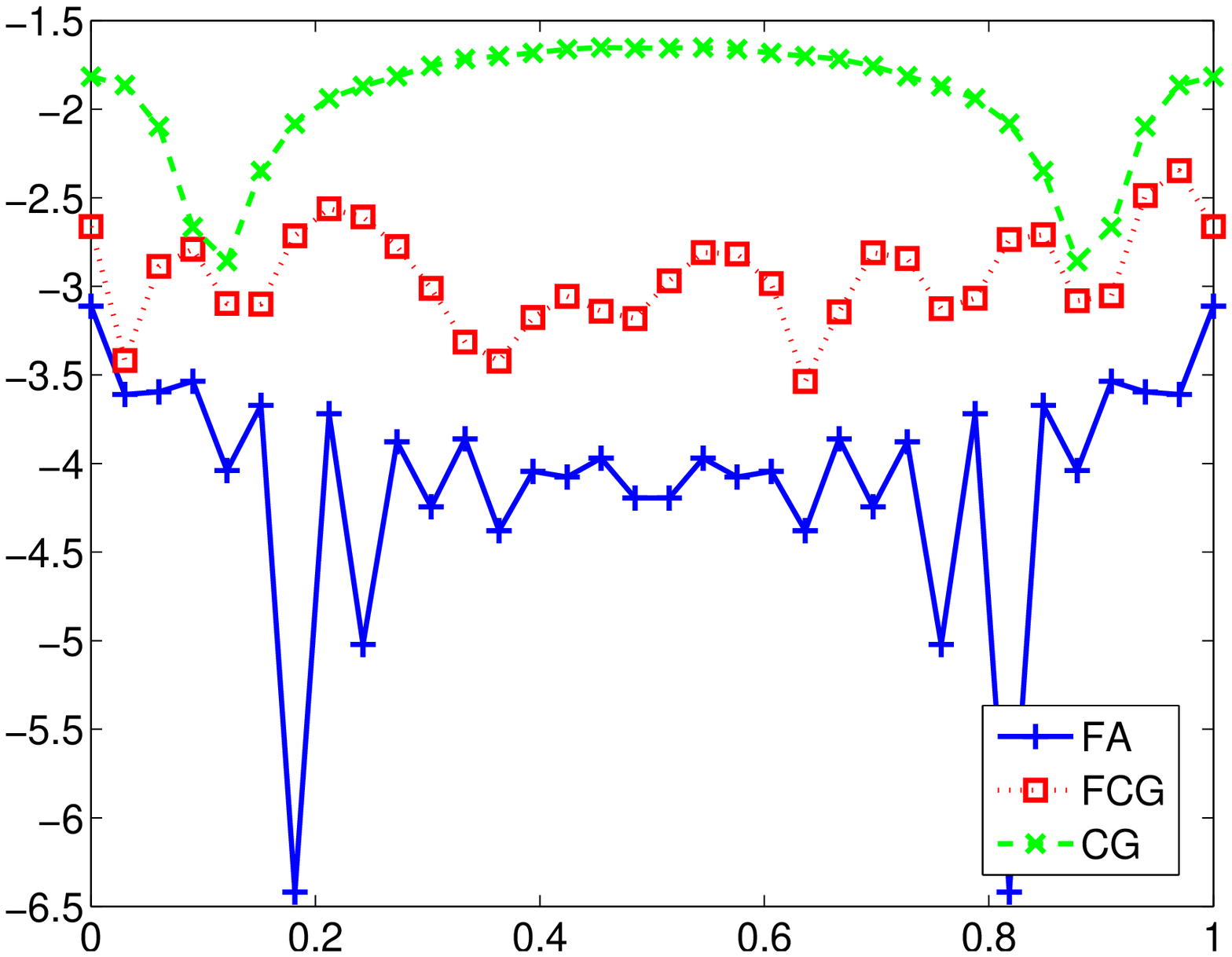}}
\caption{Comparisons of recovered functions (A) and pointwise errors (in log) (B) of these three methods: the frame approximation method (FA), the traditional convolutional gridding method (CG), and the frame theoretic convolutional gridding (FCG) for log sampling.}\label{fig1b}
\end{figure}

Figure \ref{fig1b} displays the analogous results for the log sampling pattern. 
Note in this case that the corresponding sequence $\{\varphi_j : j \in \bZ\}$ may not constitute a frame except for its finite span. Nevertheless, the frame theoretic convolutional gridding method is still effective.
\begin{rem}
When jittered sampling is used, we do not expect to see much difference between the results using the traditional or frame theoretic convolutional gridding algorithm.  This is because the DCFs look very similar in this case, see Figure \ref{fig:dcf:jit}.  On the other hand, the log sampling case yields a substantial difference in the DCFs, and consequently in the numerical results.   As noted previously, the iterative DCF algorithm yields qualitatively similar results, but is more computationally intensive, and is only established for the traditional (linear) convolutional gridding algorithm (e.g. $r = 1$). 
\end{rem}

Figure \ref{fig1c} compares the log approximation error using the frame theoretic convolutional gridding algorithm with $r = 1, \mathcal{O}(\log n),$ and the Fourier frame approximation for  $n  =16, 32, 64, 128$.   Observe that for $r = 1$, that is, traditional convolutional gridding, the method fails to converge as $n$ increases.  In fact the method may not converge for any fixed $r$.  The method does converge (assuming no round-off error) for $r = \mathcal{O}(\log n)$, however.
\begin{figure}[htbp]
\subfloat[Jittered sampling]{\includegraphics[scale=0.35]{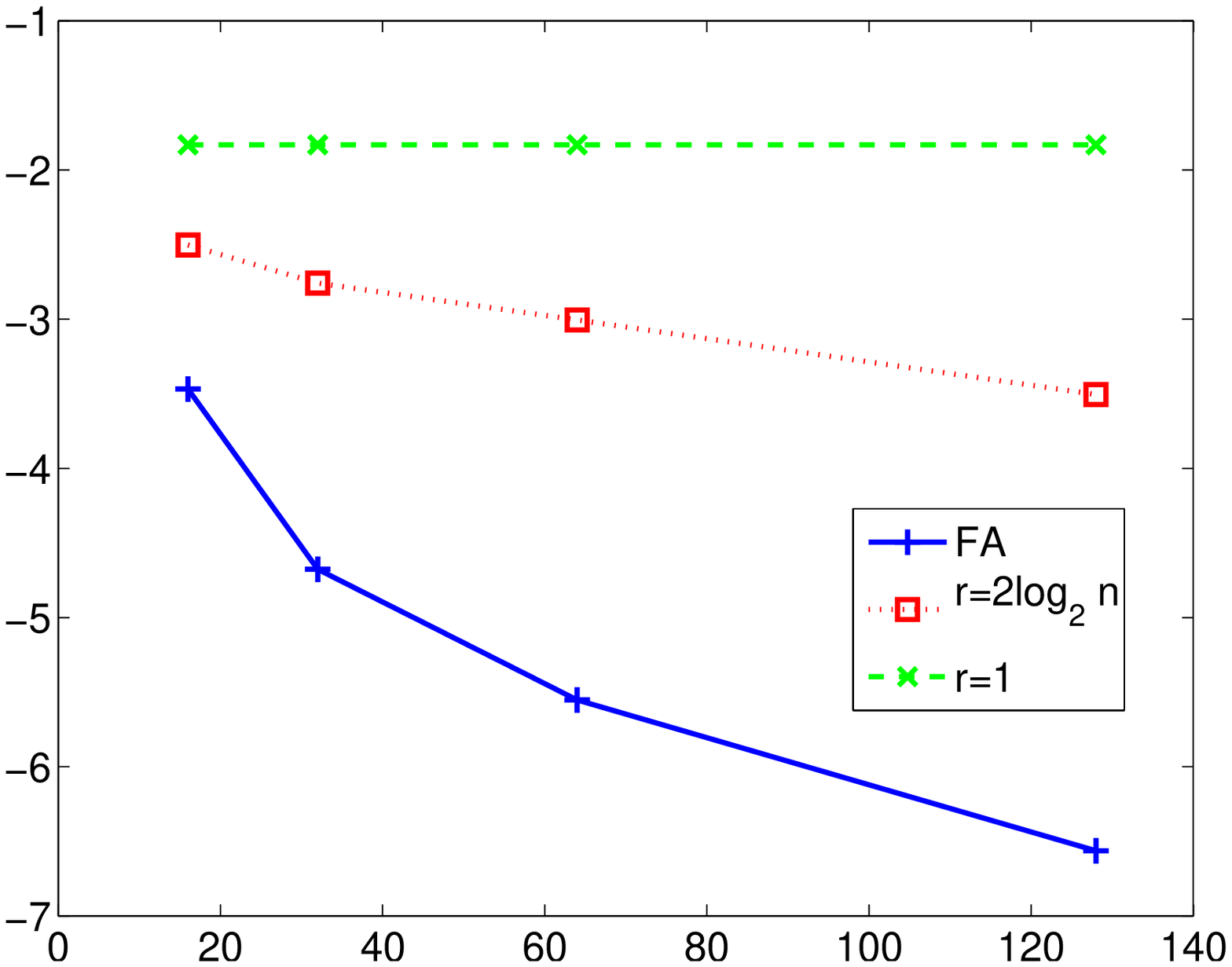}}
\qquad
\subfloat[Log sampling]{\includegraphics[scale=0.35]{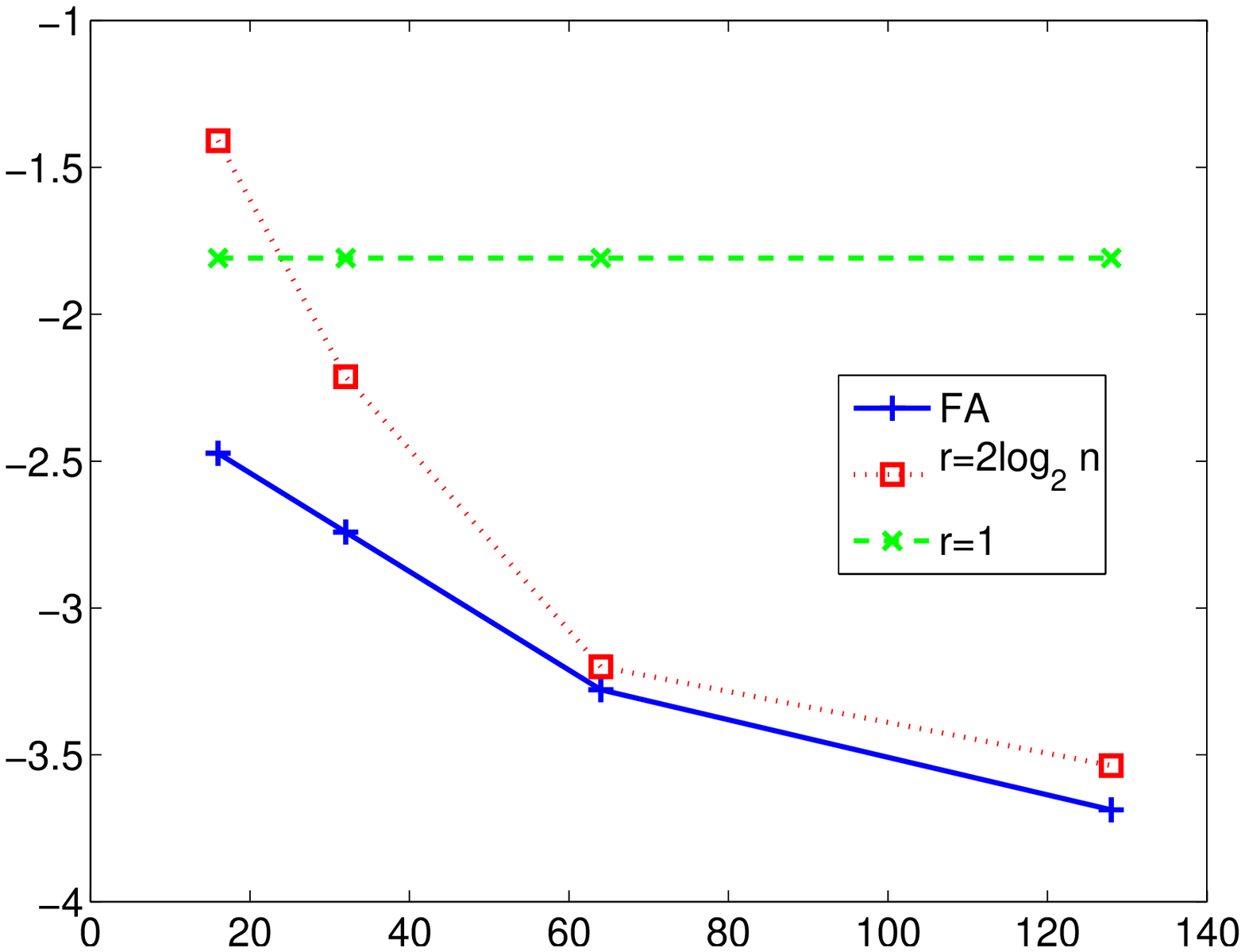}}
\caption{Approximation errors (in log) for different choices of $r$.}\label{fig1c}
\end{figure}

We now demonstrate the frame theoretic convolutional gridding method as a means of detecting edges from given non-uniform Fourier data.  Consider the test function
\begin{example}\label{ex:example2}
{\rm We consider the following piecewise continuous function}
\begin{equation*}
f(x)=\left\lbrace
\begin{array}{l l}
3/2, & 1/8\leq x\leq 1/4;\\
\frac{7}{4} - \frac{x}{2} + \sin(2\pi x -1/4),& 3/8\leq x\leq 9/16;\\
\frac{11}{4}x -5, & 11/16\leq x\leq 7/8;\\
0, & {\rm otherwise}, 
\end{array}
 \right.
\end{equation*}
{\rm for which we are given Fourier coefficients at either the jittered or log samples}.   
\end{example}
As an approximate indicator function $h$ in \eqref{eq:bump} we choose 
\begin{equation*}
h(x)=\se^{-5x^2}, \quad \epsilon_n =0.02.
\end{equation*}
Figure \ref{fig2a} shows Example \ref{ex:example2} and its corresponding target edge function, $[f](\xi)h_n(x)$ as in \eqref{eq:jumpfunction_reg}.
\begin{figure}[htbp]
\subfloat[Example \ref{ex:example2}]{\includegraphics[scale=0.35]{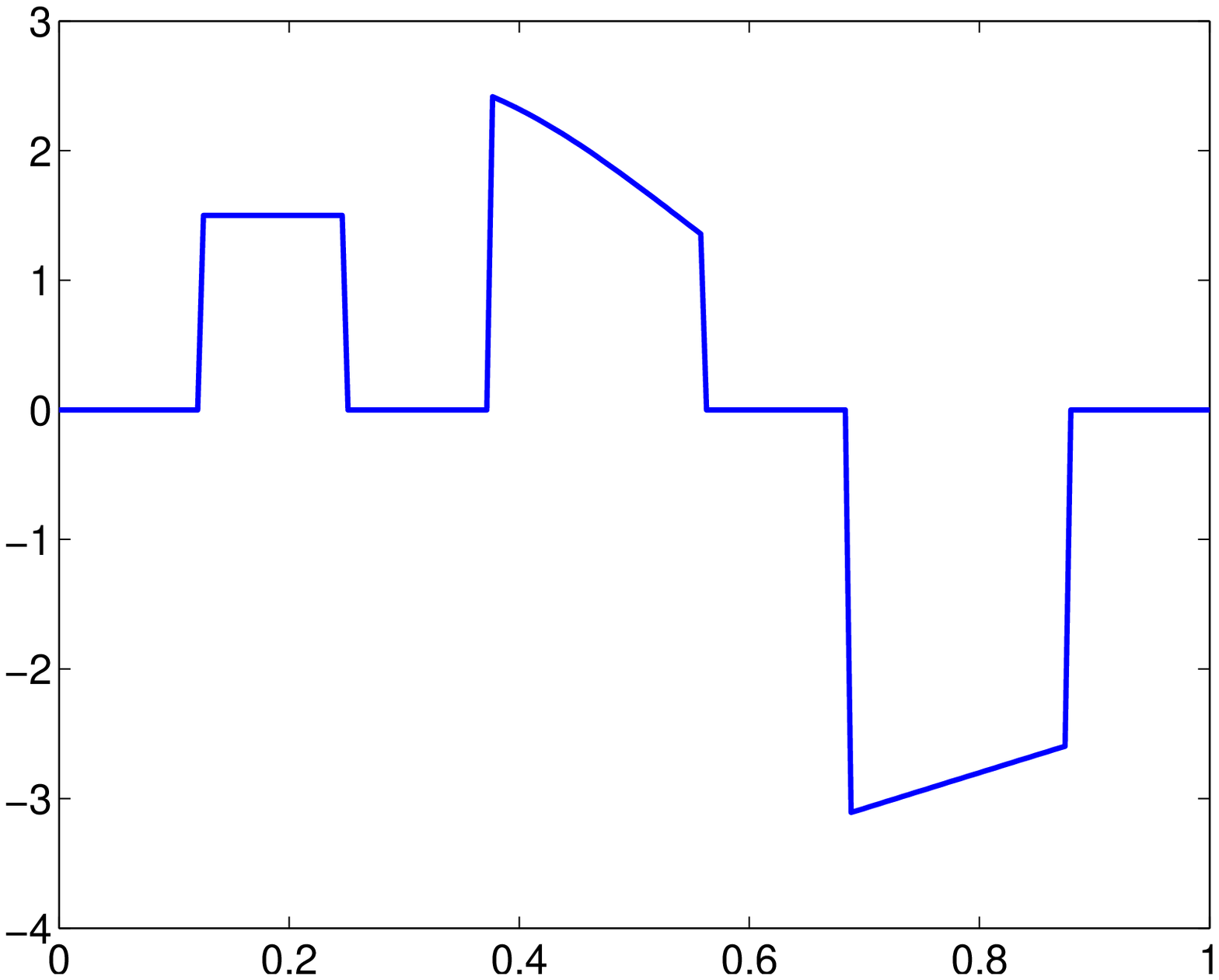}}
\qquad
\subfloat[The target edge function]{\includegraphics[scale=0.35]{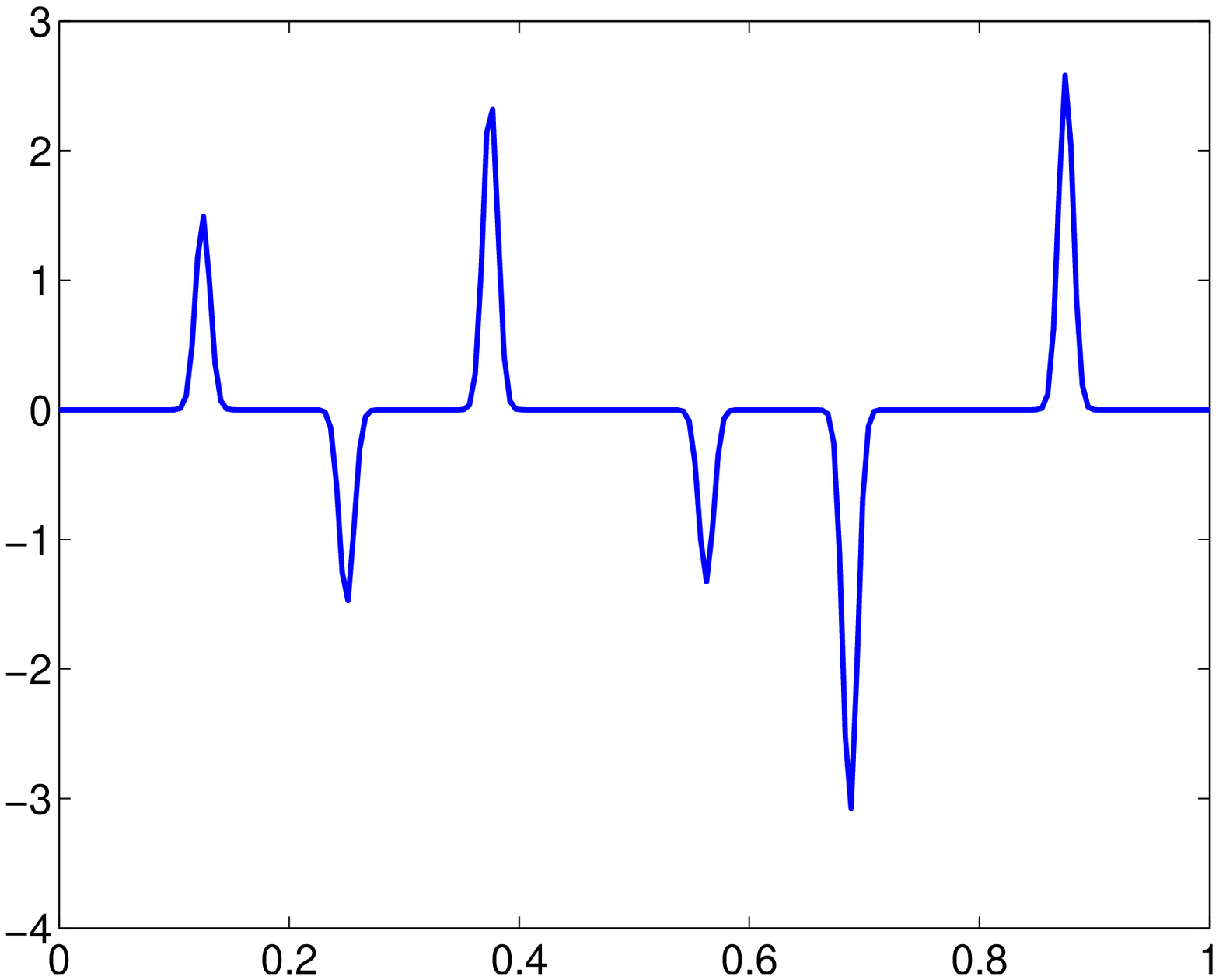}}
\label{fig2a}
\end{figure}

Figures \ref{fig2b} and \ref{fig2c}  compare the recovered edge functions and the approximation errors to the target edge function for given $n = 512$ jittered and log spaced Fourier samples respectively.  Once again we use the window function from (\ref{eq:windowfunction}) for the convolutional gridding and frame theoretic convolutional gridding, for which we chose $r = 25$.  Fewer oscillations in smooth regions are evident using the frame theoretic convolutional gridding.

\begin{figure}[htbp]
\subfloat[Edge functions]{\includegraphics[scale=0.35]{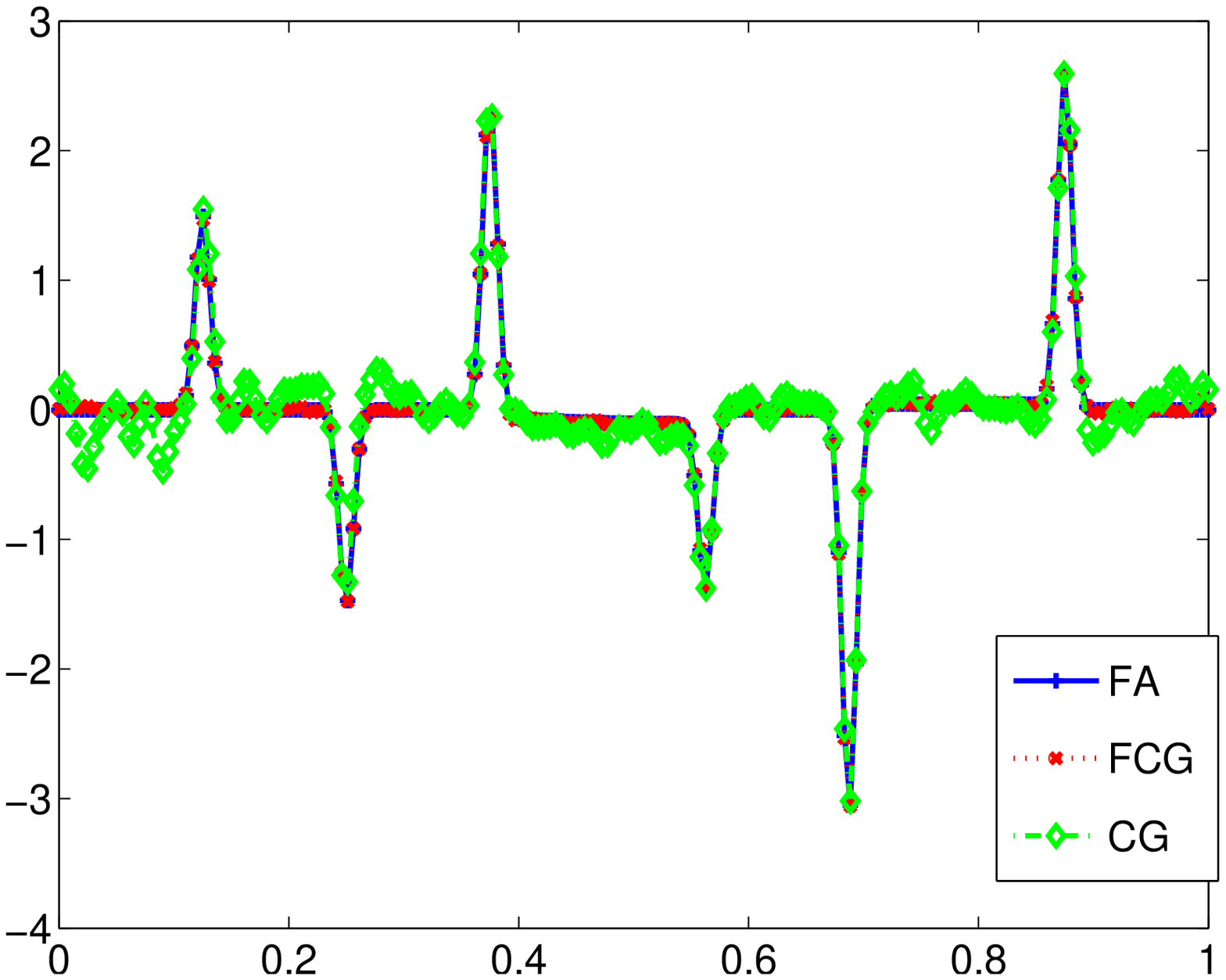}}
\qquad
\subfloat[Approximation errors]{\includegraphics[scale=0.35]{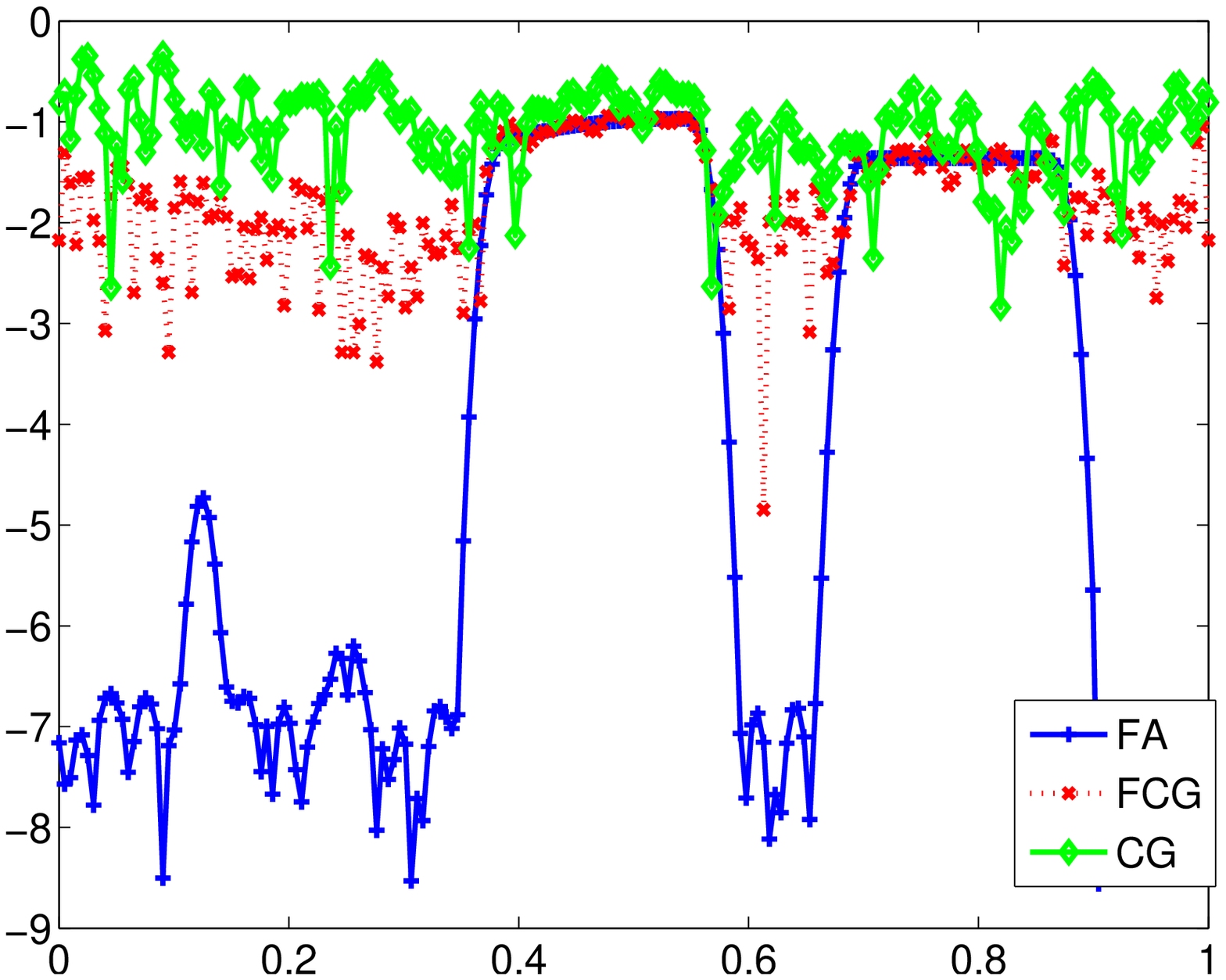}}
\caption{Comparisons of the recovered edge functions and the pointwise approximation errors (in log) to the target edge function for jittered sampling.}\label{fig2b}
\end{figure}

\begin{figure}[htbp]
\subfloat[Edge functions]{\includegraphics[scale=0.35]{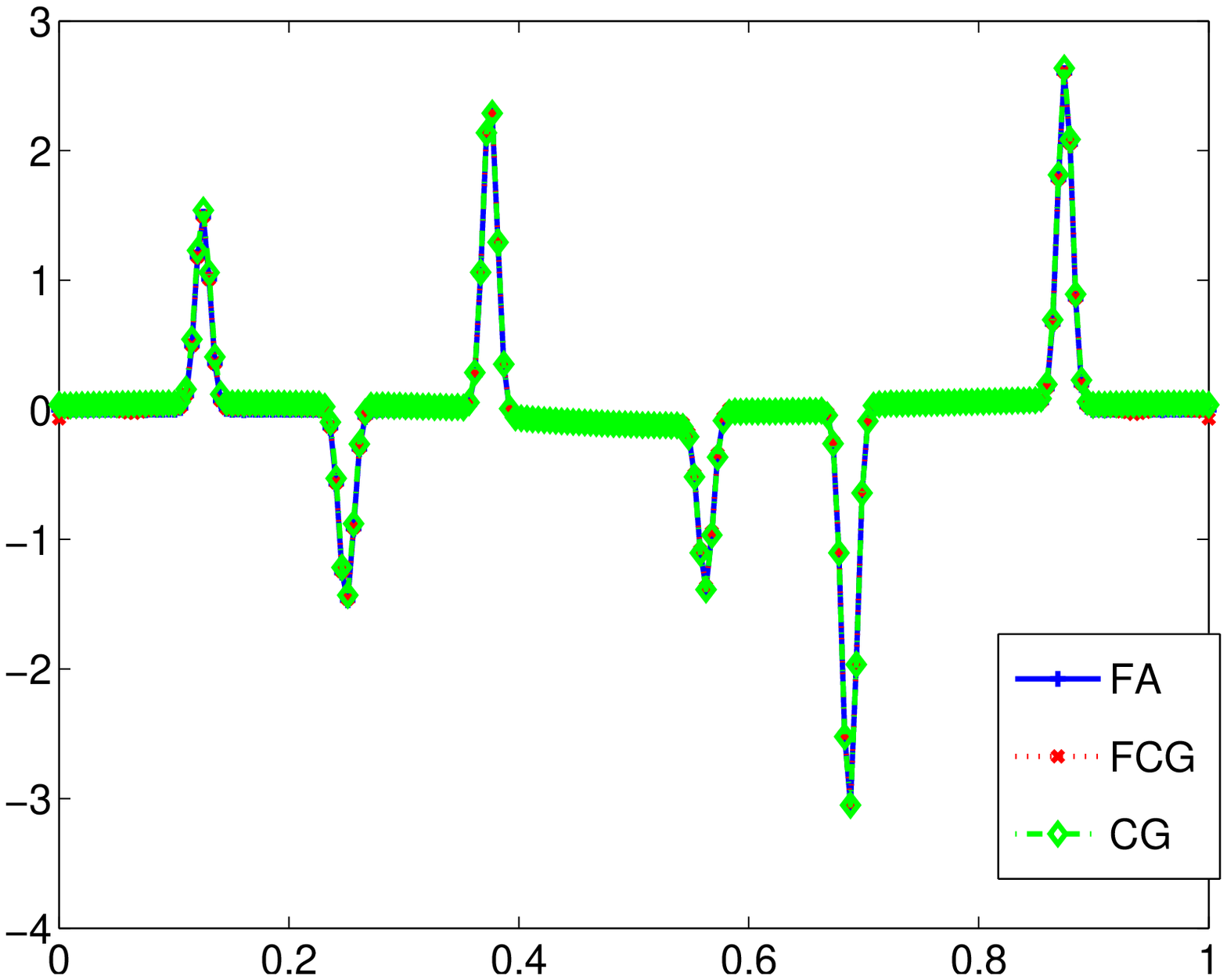}}
\qquad
\subfloat[Approximation errors]{\includegraphics[scale=0.35]{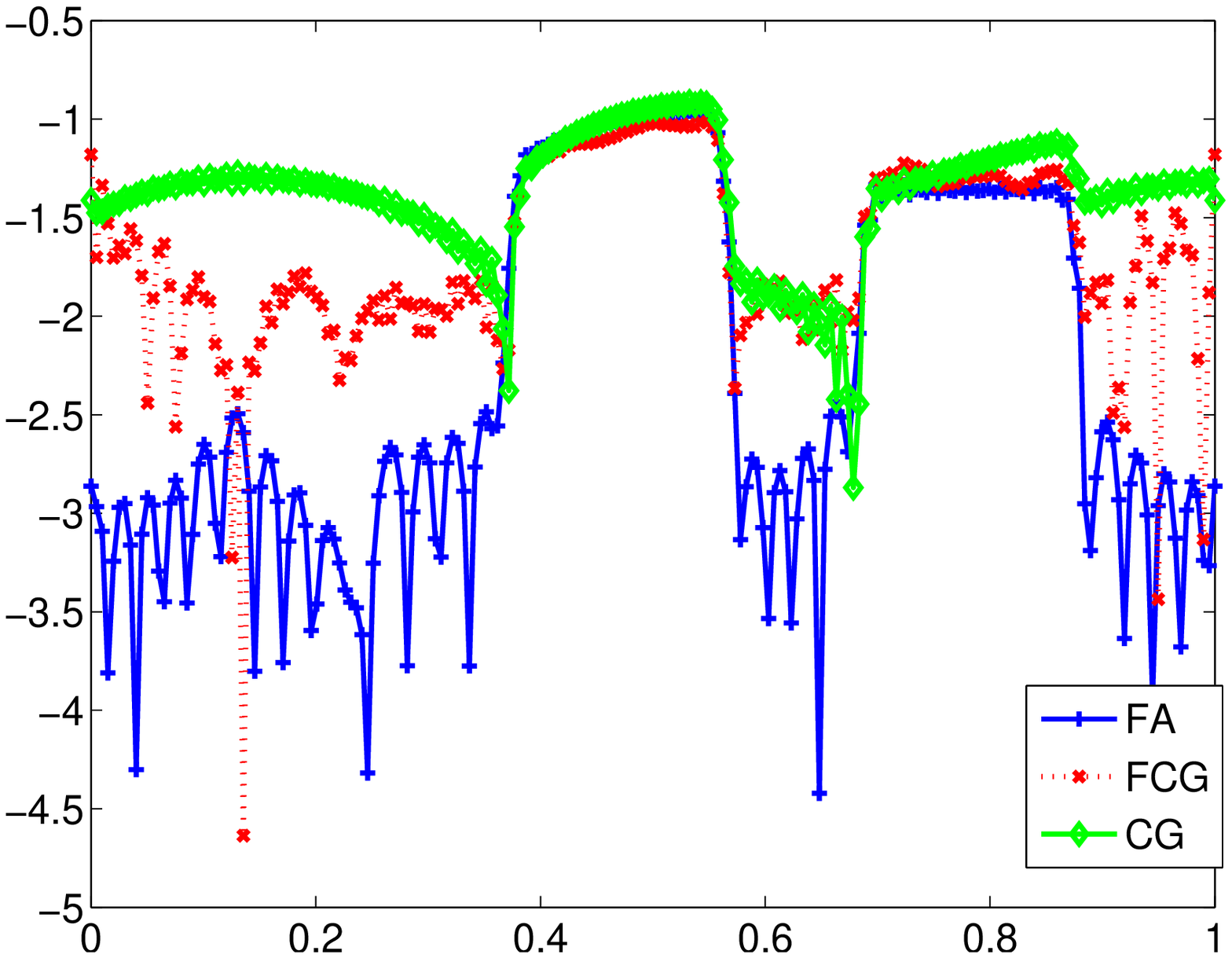}}
\caption{Comparisons of the recovered edge functions and the pointwise approximation errors (in log) to the target edge function for Log sampling.}\label{fig2c}
\end{figure}

We note that it is not straightforward to analyze the edge detection results.  In addition to the flexibility in choosing the window function and truncation parameters associated with convolutional gridding, the indicator function approximation (\ref{eq:jumpfunction_reg}) and additional error incurred in  (\ref{eq:integration}) can contribute to the overall approximation.
Further, the results displayed in Figures \ref{fig2b} and  \ref{fig2c} can be improved by post-processing.  For example, the sparsity enforcing procedure introduced in \cite{SVGR}, with some modifications, should be applicable.

\section{Conclusion}
\label{sec:conclusion}
By viewing the convolutional gridding (non-uniform FFT) method as a frame approximation, we are able to perform rigorous error analysis.  This enables us to choose optimal density compensation factors (DCFs) for the convolutional gridding method from a frame theoretic rather than heuristic perspective.  Furthermore, since the traditional (linear) convolutional gridding method fails to converge, we generalize our approach by defining the DCFs as the elements of a banded matrix instead of a vector, thereby increasing the overall accuracy of the approximation and generating numerical convergence.  As the band increases, we get closer to the frame approximation of the underlying function.   However, the computational cost for a bandwidth proportional to the log of the number of given Fourier samples remains of the same order as the traditional convolutional gridding algorithm.  Our numerical results are consistent with our theoretical results, and even in the case where the underlying sampling pattern does not generate a frame, we are able to see improvement using our method. Finally, it is important to note that the frame theoretic convolutional gridding method is a forward algorithm, and thus avoids the computational costs necessarily incurred by solving an inverse problem.

There are a large number of parameters in the convolutional gridding algorithm that can be modified for any particular circumstance.  For example, greater accuracy is possible if a larger truncation number $q$ is used in Algorithm \ref{alg:convgridding}, but this would increase the number of calculations.  There also may be better ways to choose the admissible frame, which may well lead to other types of ``regridding'' algorithms that are not FFT compatible.  In edge detection applications, using different regularizations for the indicator function, (\ref{eq:bump}), may improve the results.  However, doing so may also mean that a larger number of Fourier samples are needed to resolve the edges.
We will explore all of these ideas in future work.  
Finally, the convolutional gridding algorithm extends analytically and numerically to two dimensions.  The parameters of the method, however, are still chosen heuristically.  In future work we will investigate extending the use of admissible frames, \cite{SongGelb2013}, to two dimensions, which may enable us to choose these parameters from a frame theoretic perspective.

\bibliographystyle{siam}
\bibliography{conv_grid}

\end{document}